\documentclass{amsart}
\usepackage{graphicx}
\usepackage{amssymb}

\newtheorem{thm}{Theorem}[section]
\newtheorem{cor}[thm]{Corollary}
\newtheorem{lem}[thm]{Lemma}
\newtheorem{prop}[thm]{Proposition}
\theoremstyle{definition}
\newtheorem{defn}[thm]{Definition}
\theoremstyle{remark}
\newtheorem{rem}[thm]{Remark}
\theoremstyle{conjecture}
\newtheorem{conj}[thm]{Conjecture}
\theoremstyle{example}
\newtheorem{ex}[thm]{Example}


\begin{document}

\title[Y-shaped singularities]{Solutions to the Monge-Amp\`{e}re equation with polyhedral and Y-shaped singularities}
\author{Connor Mooney}
\address{Department of Mathematics, UC Irvine}
\email{\tt mooneycr@math.uci.edu}

\begin{abstract}
We construct convex functions on $\mathbb{R}^3$ and $\mathbb{R}^4$ that are smooth solutions to the Monge-Amp\`{e}re equation 
$$\det D^2u = 1$$
away from compact one-dimensional singular sets, which can be Y-shaped or form the edges
of a convex polytope. The examples solve the equation in the Alexandrov sense away from finitely many points. 
Our approach is based on solving an obstacle problem where the graph of the obstacle is a convex polytope.
\end{abstract}
\maketitle
\section{Introduction}
The problem of constructing singular Monge-Amp\`{e}re metrics has attracted recent attention due to its connections with mirror symmetry (\cite{Li}, \cite{Lo}, \cite{LYZ}, \cite{CL2}, \cite{JX}). 
By a singular Monge-Amp\`{e}re metric we mean the Hessian of a convex function $u$ on $\mathbb{R}^n$ that is a smooth solution to the Monge-Amp\`{e}re equation
\begin{equation}\label{MA}
\det D^2u = 1
\end{equation}
away from a small singular set $\Gamma$, where $\Delta u$ blows up. Of particular interest seems to be the case that $\Gamma$ is a trivalent graph and $n = 3$. 

The purpose of this paper is to develop a robust method for constructing such examples. In particular, we show:
\begin{thm}\label{Main}
Let $\Omega \subset \mathbb{R}^n$ be a compact convex polytope, and let $\Gamma_k$ denote its $k$-skeleton. Assume further that $n = 3$ or $n = 4$. Then there exists a convex function $u: \mathbb{R}^n \rightarrow \mathbb{R}$ such that $\Gamma_1 \subset \{u = 0\},\, u \in C^{\infty}(\mathbb{R}^n \backslash \Gamma_1)$, and
\begin{equation}\label{Deltas}
\det D^2u = 1 + \sum_{q \in \Gamma_0} a_q\delta_q
\end{equation}
in the Alexandrov sense, for some coefficients $a_q > 0$.
\end{thm}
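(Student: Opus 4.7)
Following the abstract's approach, I would construct $u$ by solving a Monge-Amp\`ere obstacle problem with a polyhedral obstacle. On a large ball $B_R \supset \Omega$, seek a convex function $u$ with $u \geq \varphi$ on $B_R$, $u|_{\partial B_R} = \tfrac{1}{2}|x|^2 + C$, and $\det D^2 u = 1$ in $\{u > \varphi\}$ in the Alexandrov sense, for a piecewise linear convex obstacle $\varphi : \mathbb{R}^n \to \mathbb{R}$ whose graph in $\mathbb{R}^{n+1}$ is a convex polytope tailored to $\Omega$. The obstacle should be designed so that $\{\varphi = 0\} \supseteq \Gamma_1$ and the non-smooth locus of $\varphi$ inside $\Omega$ matches $\Gamma_1$ as closely as possible; one concrete candidate is $\varphi = \max_F \ell_F$ with $F$ ranging over facets of $\Omega$ and $\ell_F$ affine, vanishing on the supporting hyperplane of $F$ and negative on $\mathrm{int}(\Omega)$. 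Existence of $u$ follows from Perron / variational methods in the Monge-Amp\`ere obstacle theory.

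The analysis then proceeds in three steps. (i) By comparison against radial MA solutions and barriers built from $\varphi$, the contact set $\{u = \varphi\}$ is forced to contain $\Gamma_1$; tuning $C$ appropriately shrinks the contact set to (essentially) a measure-zero set containing $\Gamma_1$, so that $u$ vanishes on $\Gamma_1$ and $\det D^2 u = 1$ holds a.e.\ off this set. (ii) Off the contact set, $u$ solves the MA equation classically and is smooth by Caffarelli's interior regularity theory; on the contact set, $u$ inherits the polyhedral smoothness of $\varphi$, yielding $u \in C^{\infty}(\mathbb{R}^n \setminus \Gamma_1)$. (iii) At each vertex $q \in \Gamma_0$, the subdifferential $\partial u(q)$ is an $n$-dimensional convex set of positive Lebesgue measure $a_q > 0$ (the multiple facet normals of $\Omega$ meeting at $q$ produce this positive volume), giving the delta mass $a_q \delta_q$ in the Alexandrov measure; along the one-dimensional edges the subdifferential sweeps a set of Hausdorff dimension at most $n - 1$ and contributes zero. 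Assembling these, $\det D^2 u = 1 + \sum_{q \in \Gamma_0} a_q \delta_q$ in the Alexandrov sense.

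The central technical difficulty is step (ii), particularly in dimension $n = 4$. The natural polyhedral obstacle $\varphi = \max_F \ell_F$ has non-smooth locus equal to the $(n-2)$-skeleton of $\Omega$: this coincides with $\Gamma_1$ when $n = 3$, giving the result immediately, but equals the $2$-skeleton when $n = 4$, strictly larger than $\Gamma_1$. To prove $u \in C^{\infty}$ across the interiors of $2$-faces one must either design a subtler obstacle whose non-smooth locus is already $\Gamma_1$ in $n = 4$, or show that the MA obstacle problem itself regularizes $u$ across $2$-faces despite the polyhedral structure of $\varphi$ there. I expect this to be the heart of the argument, and the dimensional restriction $n \leq 4$ in the theorem likely reflects the boundary of what such smoothing can achieve (via, e.g., partial Legendre transforms, reflection across $2$-faces, or explicit model solutions adapted to the polyhedral structure).
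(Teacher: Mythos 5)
Your proposal captures the right flavor---an obstacle problem with a piecewise linear obstacle---but it misses the central mechanism of the paper, namely \emph{Legendre duality}, and this is not a cosmetic omission: without it the construction cannot produce the Dirac masses.

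The paper does not solve an obstacle problem for $u$ itself. It solves an obstacle problem for a \emph{dual} function $u^*$, with the obstacle being the support function $P^*(x) = \sup_{y \in \Omega} y\cdot x$ of $\Omega$ (a one-homogeneous piecewise linear function on all of $\mathbb{R}^n$, not a function that vanishes on $\Gamma_1$). The desired solution is then $u = (u^*)^*$. The reason the duality is essential is measure-theoretic: the solution $u^*$ to the obstacle problem satisfies $\det D^2 u^* \le 1$ as a measure, hence $|\partial u^*(p)| = 0$ for every point $p$, so $u^*$ cannot have atoms in its Monge--Amp\`ere measure. Exactly the same constraint would apply to the $u$ you propose: if $u \ge \varphi$, $u$ is the Perron solution with $\det D^2 u \le 1$ on $B_R$, and $u(q) = \varphi(q)$ at a vertex $q$ where $\partial \varphi(q)$ is $n$-dimensional, then $\partial u(q) \supseteq \partial\varphi(q)$ has positive measure, contradicting $\det D^2 u \le 1$. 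So your step (iii), which asks $\partial u(q)$ to have positive measure, is incompatible with your step (i)--(ii) framework: the obstacle problem will simply detach from $\varphi$ near the vertices, and $u$ will fail to vanish on $\Gamma_0$. In the paper the Dirac masses arise automatically under the Legendre transform, because an $n$-dimensional piece of the contact set $\{u^* = P^*\}$ on which $u^*$ has a fixed gradient $v\in\Gamma_0$ collapses to the single dual point $v$, and $\partial u(v)$ is precisely that piece.

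Related to this, the obstacle you propose ($\varphi = \max_F \ell_F$ built from facet-supporting hyperplanes of $\Omega$) lives in the ``primal'' space and has the wrong geometry: its zero set is $\partial\Omega$, not $\Gamma_1$, and its ridge set is the $(n-2)$-skeleton. The paper's obstacle $P^*$ lives in the dual space; it is the max of the linear functions $x \mapsto v\cdot x$ over vertices $v$ of $\Omega$, and its ridge structure consists of the cones over the faces of the dual polytope $\Omega^*$. Your speculated explanation for the restriction $n \le 4$ (``smoothing across $2$-faces'') is therefore also off: the true source is the propagation result (Corollary 2.5), which gives $\dim \partial u^*(p) < n/2$ for any $p$, so for $n \le 4$ every subgradient of $u^*$ is a point or a segment; this is what makes it possible to verify (Lemma 4.2) that $\partial u^* = \partial P^*$ on the contact set, which in turn is exactly the condition needed so that the Legendre transform $u$ is smooth off $\Gamma_1$ and satisfies the stated equation. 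Smoothness of $u$ off $\Gamma_1$ is then obtained not from Caffarelli's interior theory plus ``inheriting the obstacle's smoothness,'' but from the Cheng--Yau argument that a non--strictly-convex point of $u^*$ would force an affine segment meeting the contact set, contradicting $\partial u^* = \partial P^*$ there; $\nabla u^*$ is then a diffeomorphism on $\mathbb{R}^n\setminus K$, and $\nabla u$ its inverse.
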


\noindent By $k$-skeleton we mean the collection of faces of dimension at most $k$. We also construct examples where the singular set is a Y-shape that may lie in a plane (in which case it is not contained in a level set of $u$) or not. In \cite{JX} the authors studied equations of the form (\ref{Deltas}) in $\mathbb{R}^n$, where the right hand side is the sum of a constant with some Dirac masses. They showed when $n \leq 4$ that global solutions are smooth away from the collection of line segments that connect pairs of masses, but it remained open whether the solutions could in fact be singular on these segments. Theorem \ref{Main} answers this question positively.

\begin{rem}
Such examples are not possible in dimension $n = 2$ because in that case, solutions to $\det D^2u \geq 1$ are strictly convex (\cite{A}, see \cite{M1} for a proof).
\end{rem}

\begin{rem}
The polytope $\Omega$ from Theorem \ref{Main} is allowed to be degenerate, e.g. the convex hull of $n$ or fewer points.
\end{rem}
 
Our approach to Theorem \ref{Main} is based on solving an obstacle problem for the Monge-Amp\`{e}re equation, with an obstacle whose graph is a convex polytope. To obtain the examples we take the Legendre transform of solutions to the obstacle problem. By playing with the choice of obstacle, one can in fact construct a variety of singular examples in $\mathbb{R}^3$ and $\mathbb{R}^4$ where the singular set is a graph that need not be the set of edges of a convex polytope or a trivalent graph (see the discussion at the beginning of Section \ref{YSing}). In \cite{S} Savin studies an obstacle problem for the Monge-Amp\`{e}re equation with a linear obstacle, and the existence and regularity theory developed in that paper are important in our constructions. 

Most of the steps in the proof of Theorem \ref{Main} work in any dimension. We specialize to dimension $n \leq 4$ to prove a certain qualitative regularity result about solutions to the obstacle problem (see Lemma \ref{LowDRegProp}). Provided the analogue of that result holds in higher dimensions, we can generalize Theorem \ref{Main} to higher dimensions (see Proposition \ref{RegProp}). We state the generalization here as a conjecture:

\begin{conj}\label{nDMain}
Let $\Omega \subset \mathbb{R}^n$ be a compact convex polytope, and let $\Gamma_k$ denote its $k$-skeleton. Then there exists a convex function $u : \mathbb{R}^n \rightarrow \mathbb{R}$ such that
$$\Gamma_{\left\lceil \frac{n}{2} - 1\right\rceil} \subset \{u = 0\}, \quad u \in C^{\infty}\left(\mathbb{R}^n \backslash\Gamma_{\left\lceil \frac{n}{2} - 1\right\rceil}\right), \quad \text{and} \quad \det D^2u = 1 + \sum_{q \in \Gamma_0} a_q\delta_q$$
for some coefficients $a_q > 0$.
\end{conj}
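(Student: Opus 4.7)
The plan is to reduce the conjecture, in accordance with the strategy laid out in the introduction, to the higher-dimensional analogue of Lemma \ref{LowDRegProp}: once that analogue is available, Proposition \ref{RegProp} delivers the conjecture, because the rest of the argument (construction of a polytope obstacle encoding $\Omega$, solving the Monge-Amp\`{e}re obstacle problem $\det D^2 v = 1$, $v \geq \varphi$ via Savin's theory \cite{S}, and forming $u = v^*$) is dimension-independent. So the task is purely to establish the required qualitative regularity for the obstacle problem when $n \geq 5$.

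Given the skeleton dimension $\lceil n/2 - 1 \rceil$ appearing in the conjecture, the target regularity statement should assert: at each cell $F$ of the piecewise affine obstacle $\varphi$ whose codimension in $\mathbb{R}^n$ exceeds $\lceil n/2 - 1 \rceil$, the solution $v$ coincides with $\varphi$ on a neighborhood of $F$, so that $F$ contributes nothing to the free boundary. Equivalently, only cells of dimension at least $\lfloor n/2 \rfloor$ should interact with $\partial\{v > \varphi\}$, and their duals under the Legendre transform are exactly the faces in $\Gamma_{\lceil n/2 - 1 \rceil}$, producing the prescribed singular set of $u$.

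To attack the regularity statement I would pursue a blow-up and tangent-cone analysis at points of $\partial\{v > \varphi\}$ lying near a low-dimensional cell $F$. After a suitable normalization in the spirit of Savin's work, the rescaled obstacle converges to a cone obstacle invariant along $F$, and the rescaled $v - \varphi$ converges to a solution of a model obstacle problem on the transverse slice $F^\perp$. The desired rigidity is: on $F^\perp$ of dimension less than $\lceil n/2 \rceil$, any such model solution is identically zero. A natural route is to combine a classification of global solutions to $\det D^2 w \geq 1$ in $F^\perp$ that vanish on the dual cone of $F$ with an explicit rotationally symmetric barrier, which is feasible precisely when the transverse dimension is small enough relative to the number of facets of the obstacle meeting at $F$.

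The main obstacle is constructing this barrier/rigidity in general dimension. In $n \leq 4$ the transverse dimension is at most three, and Lemma \ref{LowDRegProp} is tractable by direct classification of the few possible configurations. For $n \geq 5$ the combinatorics of facets meeting at a low-dimensional cell grow rapidly, and one needs a quantitative tool that couples the facet count to the transverse dimension. I expect this will require either a new Alexandrov-type volume estimate for subdifferentials of obstacle-problem solutions near polytopal obstacles, or a fine-scale monotonicity/barrier principle adapted to the Monge-Amp\`{e}re obstacle problem, and producing such a tool is the essential analytic content of the conjecture.
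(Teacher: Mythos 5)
The paper does not prove this conjecture; it is stated as open except for $n \leq 4$, where it becomes Theorem~\ref{Main}. Your reduction via Proposition~\ref{RegProp} is correct, and you are right that the obstacle construction and the passage to the Legendre transform are dimension-independent. But you mischaracterize the missing piece. You claim the required regularity statement is that $v$ coincides with $\varphi$ in a neighborhood of every high-codimension cell, so that those cells lie in the interior of the contact set. This is backwards and in fact impossible: if $v = \varphi$ near a cell $\Sigma_k$ with $k \leq n/2$, then $\dim \partial v(p) \geq \dim \partial P^*(p) = n-k \geq n/2$ there, and Corollary~\ref{PropagationCor} would force $\det D^2 v \equiv 0$, contradicting $\det D^2 v = 1$ off the compact contact set. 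What is true --- and is proved in \emph{all} dimensions in Proposition~\ref{GlobalObstacleProblem}, so it cannot be the gap --- is the opposite: $K$ avoids $\Sigma_k$ for $k \leq n/2$ and has nonempty interior in each component of $\Sigma_k$ for $k > n/2$, via the barriers $w_{n,k}$ together with the propagation result.

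The genuinely missing ingredient is the analogue of Lemma~\ref{LowDRegProp}: the subgradient equality $\partial u^* = \partial P^*$ everywhere on $K$, i.e.\ that the solution picks up no supporting directions beyond those of the obstacle. This is a statement about first-order contact along $K$, not about which cells $K$ meets. The dimensional restriction enters because Corollary~\ref{PropagationCor} gives $\dim \partial u^*(p) < n/2$; for $n \leq 4$ this means $\partial u^*(p)$ is a point or a segment, which together with Savin's strict convexity and $C^1$ results (Propositions~\ref{StrictConvexity}, \ref{ExtremalC1}) pins it to exactly $\partial P^*(p)$. For $n \geq 5$ the bound only yields $\dim \partial u^*(p) \leq 2$, so $\partial u^*(p)$ could a priori have directions transverse to $\partial P^*(p)$, and the paper's remark preceding Remark~\ref{5DCase} identifies precisely this obstruction. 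Your proposed blow-up analysis would need to target this subgradient rigidity rather than the cell-intersection structure of $K$, which is already settled.
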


\noindent Here $\lceil t \rceil$ denotes the smallest integer greater than or equal to $t$. We can verify Conjecture \ref{nDMain} when $n \leq 4$, and in some simple cases when $n \geq 5$. It is not clear whether it is reasonable to expect that it holds in general (see Remark \ref{5DCase}).

\begin{rem}
The subgradient maps of the Legendre transforms of the examples from Theorem \ref{Main} can be viewed as optimal transport maps for the quadratic cost. They push forward the uniform measure to the uniform measure plus a sum of Dirac masses, and they transport convex sets with nonempty interior to the points where the masses are centered.
\end{rem}

\begin{rem}
At infinity, the solutions from Theorem \ref{Main} have the asymptotic behavior
$$u(x) = \text{const.} + \frac{1}{2}|x|^2 + O(|x|^{2-n}).$$ 
By the results in \cite{CL} and \cite{JX} about solutions to $\det D^2u = 1$ in exterior domains, these are the unique solutions with this asymptotic behavior.
In this paper we are concerned with the local behavior of solutions.
\end{rem}

The paper is organized as follows. In Section \ref{Preliminaries} we prove some preliminary results. In particular, we solve an obstacle problem for the Monge-Amp\`{e}re equation with general convex obstacle, construct a family of barriers, prove a result about propagation of singularities, and state versions of a few regularity results from \cite{S}. In Section \ref{PolyObstacle} we construct, in any dimension, global solutions to an obstacle problem where the graph of the obstacle is a convex polytope. In Section \ref{LowD} we prove that if the global solutions to the obstacle problem satisfy a certain qualitative regularity condition, then their Legendre transforms settle Conjecture \ref{nDMain}. We then prove that this condition is satisfied when $n \leq 4$ to obtain Theorem \ref{Main}. Finally, in Section \ref{YSing} we explain how to modify the approach to construct examples where the singular set is a Y-shape.

\section*{Acknowledgments}
The author is grateful to Tianling Jin and Jingang Xiong for asking the question that motivated this research, and to Richard Schoen for a helpful discussion. The research was supported by NSF grant DMS-1854788.


\section{Preliminaries}\label{Preliminaries}

In this section we recall the notion of Monge-Amp\`{e}re measure, solve an obstacle problem for the Monge-Amp\`{e}re equation, build a family of barriers, prove a result about the propagation of singularities, and recall some regularity results from \cite{S}.

\subsection{Monge-Amp\`{e}re Measure}
To any convex function $v$ on a domain $\Omega \subset \mathbb{R}^n$ we associate a Borel measure $Mv$ on $\Omega$, called the Monge-Amp\`{e}re measure of $v$, that satisfies
$$Mv(E) = |\partial v(E)|$$
for any Borel set $E \subset \Omega$. Here $\partial v$ denotes the subgradient of $v$. When $v \in C^2$ we have $Mv = \det D^2v \,dx.$
Given a Borel measure $\mu$ on $\Omega$, we say that $v$ is an Alexandrov solution to the Monge-Amp\`{e}re equation 
$$\det D^2v = \mu$$ 
if 
$$Mv = \mu.$$
Alexandrov solutions are closed under uniform convergence: if convex functions $v_k$ converge locally uniformly in $\Omega$ to
$v$, then the Monge-Amp\`{e}re measures $Mv_k$ converge weakly to $Mv$. 
For proofs of these results see \cite{Gut}.


\subsection{Obstacle Problem}\label{ObstacleProblem}
Let $\Omega \subset \mathbb{R}^n$ be a bounded strictly convex domain, $\psi$ a convex function from $\mathbb{R}^n$ to $\mathbb{R}$, $\mu$ a finite Borel measure on $\Omega$,
and $\varphi \in C(\partial\Omega)$ with $\varphi > \psi$ on $\partial \Omega$. We define
$$\mathcal{F} := \left\{v : v \in C\left(\overline{\Omega}\right) \text{ convex},\, v \geq \psi \text{ in } \Omega,\, v|_{\partial \Omega} = \varphi,\, Mv \leq \mu\right\}.$$
We note that the convex envelope $\Phi$ of $\varphi$ is in $\mathcal{F}$ and has vanishing Monge-Amp\`{e}re measure (see \cite{Gut}). 

The main result of this subsection is the solvability of an obstacle problem:

\begin{prop}\label{OP}
The function 
$$u := \inf_{\mathcal{F}} v$$ 
is in $\mathcal{F}$, and 
$$Mu = \mu \text{ in } \{u > \psi\} \cap \Omega.$$
\end{prop}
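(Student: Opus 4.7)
The plan is to verify $u \in \mathcal{F}$ (the nontrivial points being convexity and the Monge-Amp\`ere inequality) and then show $Mu = \mu$ off the contact set.

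\textbf{Step 1 (join property of $\mathcal{F}$).} The key structural fact I would establish first is that $\mathcal{F}$ is closed under the operation $h := $ convex envelope on $\overline{\Omega}$ of $\min(v_1, v_2)$, for any $v_1, v_2 \in \mathcal{F}$. Convexity of $h$ is built in, and $h \geq \psi$ holds because $\psi$ is itself a convex minorant of $\min(v_1,v_2)$. The upper bound $Mh \leq \mu$ follows from the fact that $Mh$ is concentrated on the contact set $\{h = \min(v_1,v_2)\}$: at a contact point where, say, $v_1 < v_2$, one has $h = v_1$ in a neighborhood (by convexity plus $h \leq v_1$), so locally $Mh = Mv_1 \leq \mu$; at points where $v_1 = v_2$ a subgradient inclusion argument gives the same bound. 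The boundary identity $h|_{\partial\Omega} = \varphi$ is where strict convexity of $\Omega$ enters: at each $x_0 \in \partial\Omega$, a supporting hyperplane at the extreme point $x_0$ together with continuity of $v_1, v_2$ on $\overline{\Omega}$ produces, for every $\epsilon > 0$, an affine function $L \leq \min(v_1, v_2)$ on $\overline{\Omega}$ with $L(x_0) \geq \varphi(x_0) - \epsilon$.

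\textbf{Step 2 (convexity, boundary values, and $Mu \leq \mu$).} The join property yields convexity of $u$ directly: given $x_t = tx_1 + (1-t)x_2$ and nearly optimal $v_1, v_2 \in \mathcal{F}$, the join $h$ satisfies $h(x_t) \leq t v_1(x_1) + (1-t) v_2(x_2)$ by the definition of convex envelope, and taking infima gives the convexity inequality for $u$. A diagonal argument over a countable dense subset of $\Omega$, combined with iterated application of the join, produces a decreasing sequence $v_k \in \mathcal{F}$ with $v_k \downarrow u$ locally uniformly; weak convergence of Monge-Amp\`ere measures then yields $Mu \leq \mu$. The boundary condition $u|_{\partial\Omega} = \varphi$ comes from $\Phi \in \mathcal{F}$ (an upper envelope with the right boundary values) and a lower barrier built from strict convexity of $\Omega$; since $\psi < \varphi$ on $\partial\Omega$, the obstacle is slack there and plays no role.

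\textbf{Step 3 ($Mu = \mu$ on $\{u > \psi\}$).} Suppose for contradiction that $Mu(B) < \mu(B)$ for some ball $B \Subset \{u > \psi\} \cap \Omega$. Solve the Dirichlet problem $M\tilde u = \mu|_B$ in $B$ with $\tilde u|_{\partial B} = u$. The comparison principle gives $\tilde u \leq u$ on $\overline{B}$, with strict inequality on a nonempty open subset (else $M\tilde u = Mu$ on $B$). For $B$ chosen small one also has $\tilde u > \psi$ on $\overline{B}$. Define $u'$ to be the convex envelope on $\overline{\Omega}$ of the continuous function equal to $\tilde u$ on $\overline{B}$ and to $u$ on $\Omega \setminus B$. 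The same checks as in Step 1 give $u' \in \mathcal{F}$, but $u'(x) \leq \tilde u(x) < u(x)$ at some $x \in B$, contradicting $u = \inf_{\mathcal{F}} v$.

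\textbf{Main obstacle.} The hardest part is the bound $Mh \leq \mu$ for the convex envelope in Step 1: identifying the support of $Mh$ on the contact set and controlling it there via subdifferential inclusions into those of $v_1$ and $v_2$ requires care, especially where $v_1$ and $v_2$ coincide. A secondary delicate point is the boundary regularity of the convex envelope, which relies essentially on the strict convexity of $\Omega$ and would fail for polytopal domains.
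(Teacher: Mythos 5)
Your proposal follows the same Perron-method outline the paper sketches and attributes to Savin \cite{S}: closure of $\mathcal{F}$ under the convex-envelope join, a decreasing minimizing sequence with weak convergence of Monge--Amp\`ere measures, boundary control via $\Phi$ and strict convexity of $\Omega$, and the Dirichlet-problem replacement to get equality off the contact set. The structure and the four ingredients match the paper's stated plan.

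One local claim in Step~1 is false as written and should be dropped: it is not true that at a contact point where $v_1 < v_2$ the envelope $h$ equals $v_1$ in a neighborhood. A one-dimensional example: $v_1(x) = x^2$, $v_2(x) = (x-4)^2$; the convex envelope of $\min(v_1,v_2)$ vanishes on $[0,4]$, so $h(0) = v_1(0)$ but $h < v_1$ on $(0,4)$. Fortunately the subgradient-inclusion argument you reserve for the case $v_1 = v_2$ is what works everywhere: if $h(x_0) = v_i(x_0)$ and $h \leq v_i$ on all of $\overline{\Omega}$, then $\partial h(x_0) \subset \partial v_i(x_0)$; splitting the contact set as $\{h=v_1\}$ and $\{h = v_2\}\setminus\{h=v_1\}$ and using that $Mh$ vanishes off the contact set gives $Mh \leq \mu$ with no separate ``open contact'' case. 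The same remark applies in Step~3: the pasted function $v := \tilde u$ on $\overline{B}$, $v := u$ off $B$, is generally not convex (unlike the classical Perron lift for the Dirichlet problem, here the replacement moves the function \emph{down}, so the kink at $\partial B$ opens the wrong way), which is exactly why you pass to the convex envelope $u'$; the bound $Mu' \leq \mu$ then again comes from the subgradient inclusion $\partial u'(x_0) \subset \partial \tilde u(x_0)$ or $\partial u(x_0)$ at contact points, not from any local coincidence of $u'$ with $\tilde u$ or $u$.
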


\noindent The proof of Proposition \ref{OP} follows the same lines as that of Proposition 1.1 in \cite{S}, where the special case $\psi = 0$ and $\varphi = \text{const.}$ is considered. The key points are the equicontinuity of $\mathcal{F}$ (a consequence of the Alexandrov maximum principle and the continuity of $\Phi$ up to the boundary), the closedness of $\mathcal{F}$ under taking the convex envelope of the minimum of two functions in $\mathcal{F}$ (see \cite{S}),
the closedness of $\mathcal{F}$ under uniform convergence (a consequence of the weak convergence of Monge-Amp\`{e}re measures), and the solvability of the Dirichlet problem for Alexandrov solutions in strictly convex domains (see \cite{Gut}).

\begin{rem}\label{ModifiedPerron}
We can also write $u$ as the infimum of functions in 
$$\mathcal{\tilde{F}} := \left\{\tilde{v} : \tilde{v} \in C\left(\overline{\Omega}\right) \text{ convex},\, \tilde{v} \geq \psi \text{ in } \Omega,\, \tilde{v}|_{\partial \Omega} \geq \varphi,\, M\tilde{v} \leq \mu\right\}.$$
Indeed, for any function $\tilde{v} \in \mathcal{\tilde{F}}$ there is a function $v \in \mathcal{F}$ such that $v \leq \tilde{v}$, given by the convex envelope of $\min\{\Phi,\,\tilde{v}\}$.
\end{rem}


\subsection{Barriers}\label{Barriers}
We denote points in $\mathbb{R}^n$ by $(x,\,y)$ with $x \in \mathbb{R}^{n-k}$ and $y \in \mathbb{R}^k$. For $n \geq 3$ and $1 \leq k < \frac{n}{2}$, we let
$$\gamma := 2\frac{n-k}{n-2k}, \quad r := |x|, \quad t := |y|, \quad \text{and} \quad s := r^{-\gamma}t.$$ 
Then the function $w_{n,\,k}$ defined on $\mathbb{R}^n$ by
\begin{equation}\label{Super}
\left(\frac{\gamma}{2}\right)^{1-\frac{k}{n}}(\gamma-1)^{\frac{1}{n}}\, w_{n,\,k}(x,\,y) := \begin{cases}
\frac{1}{2}(r^{\gamma}  + r^{-\gamma}t^2), \quad t \leq r^{\gamma} \\
t, \quad t > r^{\gamma}
\end{cases}
\end{equation}
satisfies
$$\det D^2w_{n,\,k} = (1-s^2)^{n-k}\chi_{\{t < r^{\gamma}\}}$$
in the Alexandrov sense. We omit the calculation, which is straightforward using coordinates that are polar in $x$ and $y$. In particular,
\begin{equation}\label{SuperIneq}
\det D^2w_{n,\,k} \leq 1.
\end{equation}
We also note that $w_{n,\,k}(0) = 0$ and that
\begin{equation}\label{LipschitzSing}
w_{n,\,k}(x,\,y) \geq c(n,\,k)|y|,
\end{equation}
so $w_{n,\,k}$ has a Lipschitz singularity at the origin.

\begin{rem}
The functions $w_{n,\,k}$ resemble the Legendre transforms of the Pogorelov example $|x'|^{2-\frac{2}{n}}(1+x_n^2)$ in the case $k = 1$ (here $(x',\,x_n) \in \mathbb{R}^n$) and its generalizations \cite{C}, which are nonnegative and have Monge-Amp\`{e}re measure bounded between positive constants near the origin, but vanish on convex sets of dimension $k$.
\end{rem}

\noindent We extend the definition of $w_{n,\, k}$ to $n \geq 1$ and $k = 0$ by taking
$$w_{n,\,0}(x) := \frac{1}{2}|x|^2.$$
Finally, for $n \geq 1$ we let
\begin{equation}\label{FlatObstacle}
W_n(x) := \int_0^{|x|} \left(s^n - 1\right)_{+}^{\frac{1}{n}}\,ds,
\end{equation}
which vanishes in $B_1$ and solves
\begin{equation}\label{FlatSuper}
\det D^2W_n = \chi_{\{|x| > 1\}}
\end{equation}
in the Alexandrov sense. It also satisfies
\begin{equation}\label{GrowthRadial}
W_n(x) - \frac{1}{2}|x|^2 = \begin{cases}
O(|x|), \quad n = 1 \\
O(|\log|x||), \quad n = 2 \\
c(n) + O(|x|^{2-n}), \quad n \geq 3
\end{cases}
\end{equation}
for some constants $c(n) < 0$.

\subsection{A Propagation Result}\label{PropagationResult}
In this subsection we prove a propagation result that complements the family of barriers $\{w_{n,\,k}\}$. Again we denote points in $\mathbb{R}^n$ by $(x,\,y)$ with $x \in \mathbb{R}^{n-k}$ and $y \in \mathbb{R}^{k}$.

\begin{prop}\label{Propagation}
Assume $\det D^2u \leq \Lambda < \infty$ in $B_1 \subset \mathbb{R}^n$. If $k \geq \frac{n}{2}$, $u(0) = 0$ and $u \geq |y|$, then $\{u = 0\}$ has no extremal points in $\{y = 0\} \cap B_1$. 
\end{prop}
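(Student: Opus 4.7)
The plan is to argue by contradiction using the Alexandrov--Bakelman--Pucci estimate on sections of $u$, combined with the volume constraint imposed by $u \geq |y|$.

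Suppose for contradiction that $\{u = 0\}$ has an extremal point in $\{y = 0\} \cap B_1$; after translation take it to be $0$. Since $u \geq |y|$ forces $\{u = 0\} \subset \{y = 0\}$, and the exposed points of a closed convex set are dense in its extreme points (Straszewicz's theorem applied to $\{u = 0\} \cap \bar B_{1/2}$), after replacing $0$ by a nearby exposed point and rotating in the $x$-variables we may assume there exist $r > 0$ and a direction $e_1 \in \{y = 0\}$ so that
\[
\{u = 0\} \cap B_r \subset \{x_1 \geq 0\}, \qquad \{u = 0\} \cap B_r \cap \{x_1 = 0\} = \{0\}.
\]
The main analytic ingredient would be the standard section volume estimate: if $u$ is convex on $\Omega$ with $\det D^2 u \leq \Lambda$, and a section $S := \{u < s\}$ of height $s$ centered at a minimum of $u$ is compactly contained in $\Omega$, then $|S|^2 \geq c(n)\,s^n / \Lambda$. (This follows from normalizing $S$ to a round set via John's lemma and applying Alexandrov's estimate; see \cite{Gut}.)

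I would first treat the case in which $0$ is isolated in $\{u = 0\} \cap \bar B_r$. Then $u \geq c > 0$ on $\partial B_r$, so for $s < c$ the section $S_s = \{u < s\}$ is compactly contained in $B_r$ and the volume estimate applies. On the other hand, $u \geq |y|$ gives $S_s \subset \{|y| < s\}$, so if $P_s \subset \mathbb{R}^{n-k}$ denotes the projection of $S_s$ onto the $x$-variables, then $|S_s| \leq \omega_k s^k |P_s|$. Combining,
\[
|P_s|^2 \geq c\, s^{\,n - 2k} / \Lambda.
\]
Since $k \geq n/2$, the exponent $n - 2k$ is nonpositive, so the right-hand side is bounded below by a positive constant independent of $s \in (0,1)$. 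But since $0$ is isolated in $\{u = 0\}$, the sections $S_s$ shrink to $\{0\}$, so $|P_s| \to 0$ as $s \to 0$, a contradiction.

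For the general case (when $\{u = 0\}$ has positive dimension through $0$) I would run the same argument on the tilted function $v(x, y) := u(x, y) - \epsilon x_1$ for a suitably small $\epsilon > 0$: this preserves the Monge--Amp\`ere density, and the exposedness of $0$ ensures $v \leq 0$ on $\{u = 0\} \cap B_r$ with equality only at $0$, which localizes the relevant sections of $v$ in the directions transverse to $e_1$. The hardest step to make rigorous, and the one I would focus on, is verifying that these perturbed sections remain compactly contained in a subdomain of $B_1$ where the volume estimate applies; the exposedness of $0$ is precisely what rules out the sections escaping through $\{x_1 \leq -\delta\}$ for small $\delta$.
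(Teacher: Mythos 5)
Your overall strategy is the same as the paper's: argue by contradiction, tilt by a small affine function so that the relevant section of $u$ stays compactly contained near the extremal point, then pit the Alexandrov volume lower bound against the upper bound forced by $u \geq |y|$ (which squeezes the section into a thin slab in the $y$-directions). The volume estimate you invoke is exactly the squared form of the paper's Lemma \ref{VolumeGrowth}, and your tilt $v = u - \epsilon x_1$ is equivalent to the paper's $l_h = h(x_1 + \delta)$ via $\{v < \epsilon\delta\} = \{u < l_\epsilon\}$. Your treatment of the isolated case is complete and correct, and the reduction to exposed points via Straszewicz is a reasonable way to justify the normalization (the paper simply builds it into the affine change of variables).

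What remains is precisely the step you flag as the hardest, and the paper closes it cleanly by choosing the two small parameters in a fixed ratio. Fix $\delta < \delta_0$ (where $\{u = 0\} \cap \{x_1 \geq -\delta_0\} \subset\subset B_1$ by the exposedness normalization) and look at $\{u < l_h\}$ with $l_h = h(x_1 + \delta)$ as $h \to 0^+$. Since $l_h < 0 \leq u$ on $\{x_1 \leq -\delta\}$, these sets lie in $\{x_1 > -\delta\}$, and as $h \to 0^+$ they decrease to $\{u = 0\} \cap \{x_1 > -\delta\}$, which is compactly contained in $B_1 \cap \{|x_1| < \delta\}$. Hence for small $h$ the section lies in $\{|x_1| < \delta\} \cap \{|y| < C\delta h\}$, giving $|\{u < l_h\}| \leq C(n)\,\delta\,(\delta h)^k$; combined with the lower bound $|\{u < l_h\}| \geq c(n)(\delta h)^{n/2}$ (using $|(u - l_h)(0)| = \delta h$) one gets $(\delta h)^{n/2} \leq C(n)\,\delta\,(\delta h)^k$, which is impossible for $k \geq n/2$ once $\delta$ is small. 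The one thing to be careful about in your sketch is that you cannot fix $\epsilon$ and send $s \to 0$ independently: the set $\{v \leq 0\} = \{u \leq \epsilon x_1\}$ need not reduce to a point (e.g.\ if $u$ vanishes to high order in the $x_1$-direction along $\{y = 0\}$), so the sections $\{v < s\}$ would not shrink to $\{0\}$. Coupling $s = \epsilon\delta$ with $\delta$ fixed and $\epsilon \to 0$, as the paper does, is what makes the limiting set compact and the argument close.
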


\noindent The case $n = 2,\, k = 1$ is a classical result of Alexandrov (\cite{A}, see also \cite{M2}). Proposition \ref{Propagation} can be viewed as a generalization of this result to higher dimensions.
The proof relies on the following volume estimate (see e.g. Lemma $2.5$ from \cite{M1}):

\begin{lem}\label{VolumeGrowth}
Assume that $\det D^2u \leq 1$ on a bounded convex domain $\Omega$ and that $u|_{\partial \Omega} = 0$. Then
$$|\Omega| \geq c(n)\left|\min_{\Omega} u\right|^{\frac{n}{2}}.$$
\end{lem}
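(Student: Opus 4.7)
The plan is to bound the Monge--Amp\`ere mass of $u$ on $\Omega$ from below and then use the hypothesis $\det D^2u \leq 1$, which gives $|\partial u(\Omega)| = \int_\Omega Mu \leq |\Omega|$. Set $m := |\min_\Omega u|$ and let $x_0 \in \Omega$ be a point where $u(x_0)=-m$; since $u=0$ on $\partial\Omega$, the point $x_0$ lies in the interior.

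I would first establish a \emph{diameter-dependent} version of the estimate: with $D := \mathrm{diam}(\Omega)$, we have $B_{m/D} \subset \partial u(\Omega)$, and consequently $|\Omega| \geq c(n)\, m^n / D^n$. Indeed, for any $p$ with $|p| < m/D$, the convex function $\phi_p(x) := u(x) - p\cdot(x-x_0)$ equals $-m$ at $x_0$, while for $x \in \partial\Omega$ we have $\phi_p(x) = -p\cdot(x-x_0) \geq -|p|D > -m$. Thus $\phi_p$ attains its minimum at some interior point $x_p$, forcing $p \in \partial u(x_p)$.

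To sharpen the exponent from $n$ to $n/2$, I would invoke John's ellipsoid together with the affine (near-)invariance of the Monge--Amp\`ere equation. Let $E$ be a John ellipsoid with $E \subset \Omega \subset n \cdot E$, let $T(y) = Ay+b$ be an affine map sending $E$ to the unit ball, so that $B_1 \subset T(\Omega) \subset B_n$, and set
\[
\tilde u(y) := |\det A|^{2/n}\, u(T^{-1}y).
\]
A direct chain-rule computation gives $\det D^2\tilde u = \det D^2 u \leq 1$, while $\tilde u|_{\partial T(\Omega)} = 0$ and $|\min \tilde u| = |\det A|^{2/n}\, m$. Applying the diameter estimate to $\tilde u$ on $T(\Omega)$ (whose diameter is at most $2n$) yields
\[
|T(\Omega)| \geq c(n)\, |\det A|^{2}\, m^n.
\]
Since $|T(\Omega)| = |\det A|\cdot|\Omega|$ and $|\det A| = |B_1|/|E| \geq |B_1|/|\Omega|$, rearranging gives $|\Omega|^2 \geq c(n)\, m^n$.

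The main point to watch is the affine-rescaling bookkeeping of the second step: one must verify that the normalization $|\det A|^{2/n}$ correctly cancels the Jacobian factor in $\det D^2$ after the change of variables. Once this is set up, every ingredient is standard. Conceptually, the exponent $n/2$ is forced by the model case $u(x) = \tfrac{1}{2}(|x|^2 - R^2)$ on $\Omega = B_R$, where $|\Omega| \asymp R^n$ and $m \asymp R^2$; John's theorem simply asserts that general convex $\Omega$ reduce to this case up to dimensional constants.
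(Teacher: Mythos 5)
Your proof is correct. Note that the paper does not actually supply a proof of this lemma---it is quoted from Lemma 2.5 of \cite{M1}---so there is no in-paper argument to compare against; but your two-step argument (a subgradient/cone lower bound $B_{m/D}\subset\partial u(\Omega)$ giving the diameter-dependent estimate, followed by John normalization to sharpen the exponent from $n$ to $n/2$) is exactly the standard route, and the affine bookkeeping with the factor $|\det A|^{2/n}$ is carried out correctly: $\det D^2\tilde u=\det D^2u$, $|T(\Omega)|=|\det A|\,|\Omega|$, and $|\det A|\geq|B_1|/|\Omega|$ combine to give $|\Omega|^2\gtrsim m^n$. The only cosmetic nit is that $\int_\Omega Mu$ should be read as $Mu(\Omega)=|\partial u(\Omega)|\leq|\Omega|$; the logic is unaffected.
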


\noindent We proceed with the proof of the propagation result. Below, a Lipschitz rescaling refers to a rescaling of the form $u \rightarrow \lambda^{-1}u(\lambda x)$ with $\lambda > 0$.

\begin{proof}[{\bf Proof of Proposition \ref{Propagation}:}]
Assume by way of contradiction that $\{u = 0\}$ has an extremal point in $\{y = 0\} \cap B_1$. Then after an affine transformation in $x$ and a Lipschitz rescaling we may assume that the domain of definition for $u$ is still $B_1$, that $\Lambda = 1$, that $u(0) = 0$, that
$$\{u = 0\} \subset \{y = 0\} \cap \{x_1 \leq 0\},$$
and that 
$$\{u = 0\} \cap \{x_1 \geq -\delta_0\} \subset \subset B_1$$ 
for some $\delta_0 > 0$. Fix $\delta < \delta_0$ and let 
$$l_h := h\,(x_1 + \delta).$$
Then for $h$ small we have that
$\{u < l_h\} \subset \subset B_1$ and furthermore that
$$\{u < l_h\} \subset \{|x_1| < \delta\} \cap \{|y| < l_h\}.$$
It follows that
$$|\{u - l_h < 0\}| \leq C(n)\,\delta\,(\delta h)^k.$$
Applying Lemma \ref{VolumeGrowth} to the function $u - l_h$ and using that $|(u-l_h)(0)| = \delta h$ we conclude that
$$(\delta h)^{\frac{n}{2}} \leq C(n)\,\delta (\delta h)^k,$$
which is not possible when $k \geq \frac{n}{2}$ and $\delta$ is chosen small.
\end{proof}

\noindent For our purposes, the following corollary of Proposition \ref{Propagation} will be useful:

\begin{cor}\label{PropagationCor}
Assume that $\det D^2u \leq 1$ on $\mathbb{R}^n$, and that $\det D^2u$ is not identically zero. Then $\text{dim}(\partial u(p)) < \frac{n}{2}$ for all $p \in \mathbb{R}^n$.
\end{cor}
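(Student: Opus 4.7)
The plan is to argue by contradiction. Suppose that for some $p \in \mathbb{R}^n$ we have $k := \dim \partial u(p) \geq n/2$. After subtracting from $u$ an affine function whose gradient lies in the relative interior of $\partial u(p)$, translating so that $p = 0$ and $u(0) = 0$, and rotating coordinates, I may assume that $u \geq 0$ and that $\partial u(0)$ contains a $k$-dimensional open ball of some radius $\epsilon > 0$ centered at $0$, lying inside $\{x = 0\} = \mathbb{R}^k_y$, where I write $\mathbb{R}^n = \mathbb{R}^{n-k}_x \times \mathbb{R}^k_y$. The subgradient inequality then produces the Lipschitz-type lower bound
\[
u(x,y) \geq \epsilon |y| \quad \text{for all } (x,y) \in \mathbb{R}^n.
\]
In particular $\{u = 0\}$ is a closed convex subset of $\{y = 0\}$ containing $0$.

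I would then split into two cases using the standard dichotomy for closed convex sets: either $\{u = 0\}$ contains an entire line, or it admits an extreme point.

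If $\{u = 0\}$ has an extreme point $q_0$, then $q_0 \in \{y = 0\}$, and I perform a Lipschitz rescaling $u_\lambda(z) := (\epsilon\lambda)^{-1} u(\lambda z)$ with $\lambda > |q_0|$. A direct check shows $u_\lambda(0) = 0$, $u_\lambda(x,y) \geq |y|$, and $\det D^2 u_\lambda \leq (\lambda/\epsilon)^n$ on $B_1$. Since $\{u_\lambda = 0\} = \lambda^{-1}\{u = 0\}$, the point $q_0/\lambda$ is an extreme point of $\{u_\lambda = 0\}$ lying inside $\{y = 0\} \cap B_1$, directly contradicting Proposition \ref{Propagation}. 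If instead $\{u = 0\}$ contains a line, then using the lineality space of $\{u = 0\}$ together with $0 \in \{u = 0\}$ there is a direction $v \neq 0$ with $u(tv) = 0$ for every $t \in \mathbb{R}$. For any $a \in \mathbb{R}^n$, midpoint convexity gives
\[
u(a + tv) \leq \tfrac{1}{2}u(2a) + \tfrac{1}{2}u(2tv) = \tfrac{1}{2}u(2a),
\]
so $t \mapsto u(a + tv)$ is a nonnegative, bounded, convex function on $\mathbb{R}$, hence constant. Thus $u$ is translation-invariant in the direction $v$, which forces every subgradient to annihilate $v$; consequently $\partial u(\mathbb{R}^n)$ lies in a hyperplane, so the Monge-Amp\`ere measure $Mu$ is identically zero, contradicting the hypothesis that $\det D^2 u$ is not identically zero.

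The main obstacle I anticipate is the extreme-point case, where one must set up the Lipschitz rescaling carefully so that the hypotheses $\det D^2 u \leq \Lambda$ and $u \geq |y|$ of Proposition \ref{Propagation} survive simultaneously while the extreme point is pulled inside $B_1$; in particular it is the size of the ball in $\partial u(0)$, not merely its dimension, that controls the admissible rescaling. The line case is by comparison a soft, purely convex-analytic observation: a convex function on $\mathbb{R}^n$ that vanishes along a line through the origin must be constant in that direction.
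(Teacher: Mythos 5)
Your proof is correct and follows the same approach as the paper's: normalize so that $\partial u(0)$ contains a $k$-dimensional ball centered at $0$ in $\{x=0\}$, obtain the lower bound $u \geq \epsilon|y|$, then apply Proposition \ref{Propagation} (after a Lipschitz rescaling) to rule out extremal points of $\{u=0\}$, so that by the Minkowski dichotomy $\{u=0\}$ contains a line, forcing translation-invariance of $u$ and hence $\det D^2 u \equiv 0$. The paper's argument is more compressed but relies on exactly this chain of reasoning; you have simply made the rescaling and the dichotomy explicit.
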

\begin{proof}
Assume by way of contradiction that $\partial u(0)$ has dimension $k \geq \frac{n}{2}$, and let $(x,\,y) \in \mathbb{R}^n$ with $x \in \mathbb{R}^{n-k}$ and $y \in \mathbb{R}^k$. After subtracting a linear function, rotating, and multiplying by a constant, we may assume that $\det D^2u \leq \Lambda < \infty$ and that $B_1 \cap \{x = 0\} \subset \partial u(0)$. Then $u \geq u(0) + |y|$. By Proposition \ref{Propagation}, the set $\{u = u(0)\}$ contains a line. By convexity, $u$ is invariant under translations along this line, hence $\det D^2u \equiv 0$.
\end{proof}

\subsection{Regularity Results}\label{Regularity}
To conclude the section we state some regularity results for solutions to the obstacle problem with linear obstacle. These can be viewed as localized versions of results from \cite{S}. The first result is a consequence of Lemma $3.3$ from \cite{S}:
\begin{prop}\label{StrictConvexity}
Assume that $\det D^2u = \chi_{\{u > 0\}}$ in a bounded convex domain $\Omega \subset \mathbb{R}^n$, and that $u \geq 0$. Let $S$ be a supporting hyperplane to $\{u = 0\}$ at a point in $\Omega$. Then $S \cap \{u = 0\}$
is either a single point, or it has no extremal points in $\Omega$.
\end{prop}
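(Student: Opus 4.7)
The plan is a proof by contradiction via the volume estimate of Lemma \ref{VolumeGrowth}, in the spirit of the proof of Proposition \ref{Propagation}. Suppose $F := S \cap \{u = 0\}$ has an extremal point $q \in \Omega$ and $F \neq \{q\}$. After translating, rotating, and applying a normalization, we arrange $q = 0$, $\overline{B_{r_0}} \subset \Omega$, $S = \{x_n = 0\}$, and $\{u = 0\} \subset \{x_n \leq 0\}$. Using that $0$ is extremal in the convex set $F$ together with a further rotation within $\{x_n = 0\}$, we also arrange that $F \cap B_{r_0} \subset \{x_n = 0,\, x_1 \geq 0\}$ with $F \cap \{x_1 \leq 0\} \cap B_{r_0} = \{0\}$.

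The key object is the affine function $\ell_{h, \eta}(x) := h x_n - h\eta x_1$, for small $h, \eta > 0$ to be chosen, together with the section $\Sigma := \{u < \ell_{h, \eta}\}$. Since $\ell_{h,\eta} \leq 0$ on $F \cap B_{r_0}$ and $u = 0$ there, $\Sigma$ is disjoint from $F$ (in particular $0 \in \partial\Sigma$). Moving in the $+e_n$ direction, where $u > 0$ but $\det D^2 u \leq 1$ forces quadratic upper bounds on $u$ off $\{u = 0\}$, one checks that $(u - \ell_{h, \eta})(s e_n)$ dips to a negative minimum of order $-c h^2$. Applied to $u - \ell_{h, \eta}$ on $\Sigma$ (which is compactly contained in $B_{r_0}$ for $h, \eta$ small), Lemma \ref{VolumeGrowth} yields the lower bound
\[
|\Sigma| \;\geq\; c(n)\,\bigl|{\textstyle\min_{\Sigma}} (u - \ell_{h, \eta})\bigr|^{n/2} \;\geq\; c\,h^n.
\]

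To obtain the contradiction, one must bound $|\Sigma|$ from above by $o(h^n)$ as $h \to 0$ with $\eta$ fixed sufficiently small, by confining $\Sigma$ in each coordinate direction. The quadratic growth $u(s e_n) \geq c s^2$ (from $\det D^2 u = 1$ on $\{u > 0\}$) confines $\Sigma$ to width $\lesssim h$ in the $e_n$ direction; the tilt $-h\eta x_1$ combined with $u \geq 0$ confines $\Sigma$ in the $+e_1$ direction along $F$. The delicate step, and the main obstacle, is to confine $\Sigma$ in the $-e_1$ direction and in the remaining directions tangent to $S$ but transverse to $F$: here one must quantify the extremality of $0$ in $F$ into a superlinear lower bound for $u$ near $0$ on $\{x_n = 0, x_1 \leq 0\}$ that defeats the linear tilt, by means of a barrier argument using $\det D^2 u \leq 1$ (comparable in spirit to the propagation argument of Proposition \ref{Propagation}). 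If carried out with a suitable choice $\eta = \eta(h)$, this gives $|\Sigma| = O(h^{n+\alpha})$ for some $\alpha > 0$, contradicting the lower bound above for all sufficiently small $h$.
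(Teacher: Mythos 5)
Your proposal contains a genuine gap, which you yourself flag as ``the delicate step, and the main obstacle'' and then do not resolve. You need to confine the section $\Sigma$ in the $-e_1$ direction and in the directions in $S = \{x_n = 0\}$ transverse to $F$, and you propose to do this via a ``superlinear lower bound for $u$ near $0$ on $\{x_n = 0,\, x_1 \leq 0\}$'' established by an unspecified barrier. But such a bound is essentially what Proposition \ref{StrictConvexity} asserts: a priori $u$ could vanish to linear (or slower) order along small segments emanating from $q$ inside $S$, and ruling that out is exactly the conclusion. Proposition \ref{Propagation} does not supply the barrier you want, since it requires a Lipschitz singularity of dimension $k \geq n/2$ and at $q$ you only have a one-dimensional Lipschitz singularity (in the $e_n$ direction). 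So this step is not a technical loose end but the heart of the matter, and assuming it makes the argument circular.

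There is a second, independent error in the step you treat as routine. You assert that ``$\det D^2 u \leq 1$ forces quadratic upper bounds on $u$ off $\{u = 0\}$'' and use this to claim that $(u - \ell_{h,\eta})(s e_n)$ dips to $\sim -ch^2$. The inequality $\det D^2 u \leq 1$ does \emph{not} imply a quadratic upper bound: e.g.\ $u = |x_n| + \tfrac12|x'|^2$ satisfies $\det D^2 u = 0 \leq 1$ but grows linearly in $x_n$. What you actually need is $\partial u(0) = \{0\}$ (i.e.\ differentiability of $u$ at the extremal point with vanishing gradient), but that is precisely Proposition \ref{ExtremalC1}, which in the paper is deduced \emph{from} Proposition \ref{StrictConvexity}; invoking it here would again be circular. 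Without it, $u$ may have a Lipschitz corner at $q$ in the $e_n$ direction, in which case $\Sigma$ can be empty along the $e_n$-axis for all small $h$ and the lower bound $|\Sigma| \geq ch^n$ fails.

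For context, the paper does not prove this proposition directly; it cites Lemma 3.3 of \cite{S}, whose argument is a Caffarelli-type strict convexity proof (normalizing sections by John's lemma, rescaling, and passing to a blow-up limit to obtain a contradiction), rather than the direct volume-comparison you propose. The compactness/blow-up route is what lets one avoid having to produce, in a single stroke, the superlinear growth in the transverse directions that your approach requires.
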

\noindent In particular, if $x \in \partial\{u = 0\} \cap \Omega$, then there are two possibilities: either every supporting hyperplane to $\{u = 0\}$ at $x$ intersects $\{u = 0\}$ only at $x$, or else $x$ lies in the interior of 
a segment in $\partial\{u = 0\}$ with an endpoint on $\partial \Omega$. By combining this result with the proof of Proposition $2.8$ from \cite{S} we obtain:
\begin{prop}\label{ExtremalC1}
Let $u$ be as in Proposition \ref{StrictConvexity}. Then $\partial u(p) = \{0\}$ at every extremal point of $p$ of $\{u = 0\}$ in $\Omega$.
\end{prop}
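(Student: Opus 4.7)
We argue by contradiction. Suppose $l \in \partial u(p) \setminus \{0\}$. Since $u \ge 0 = u(p)$, also $0 \in \partial u(p)$, and convexity of the subdifferential yields $[0, l] \subset \partial u(p)$, so that
\[
u(x) \ge (l \cdot (x - p))_+ \quad \text{for all } x.
\]
After a translation and rotation we may assume $p = 0$ and that $l$ is a positive multiple of $e_n$, so that (absorbing a harmless scaling factor) $u \ge (x_n)_+$. The hyperplane $S := \{x_n = 0\}$ is then a supporting hyperplane to $\{u = 0\}$ at the extremal point $0$, and Proposition \ref{StrictConvexity} forces
\[
S \cap \{u = 0\} = \{0\}.
\]

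The plan, following the proof of Proposition $2.8$ in \cite{S}, is to derive a contradiction by comparing volumes of sections of the auxiliary convex function $v(x) := u(x) - x_n$. We have $v(0) = 0$, $v \ge 0$ (since $u \ge (x_n)_+$ and $u \ge 0$), and $Mv = Mu = \chi_{\{u > 0\}}$. On a section $\Sigma_\epsilon := \{v < \epsilon\}$, the bound $u < x_n + \epsilon$ combined with $u \ge (x_n)_+$ forces $x_n > -\epsilon$, confining $\Sigma_\epsilon$ to the slab $\{x_n > -\epsilon\}$. Moreover $\{v = 0\}$ consists of at most two isolated points (namely $0$ and at most one further point in $\{x_n > 0\}$, arising as a contact of the affine function $x_n$ with $u$ in the region where $\det D^2 u = 1$ and $u$ is smooth and strictly convex). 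Hence, for a fixed small $r > 0$, $v \ge m_r > 0$ on $\partial B_r(0)$; for $\epsilon < m_r$, $\Sigma_\epsilon \cap B_r(0)$ is compactly contained in $B_r(0) \subset \Omega$, and Lemma \ref{VolumeGrowth} applied to $v - \epsilon$ on this convex set gives the lower bound
\[
|\Sigma_\epsilon \cap B_r(0)| \ge c(n)\, \epsilon^{n/2}.
\]

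The strict convexity $S \cap \{u = 0\} = \{0\}$ is then exploited to produce a competing upper bound of the form $|\Sigma_\epsilon \cap B_r(0)| = o(\epsilon^{n/2})$, yielding the contradiction. On the portion $\{x_n \le 0\}$, compactness together with $S \cap \{u = 0\} = \{0\}$ gives the Hausdorff convergence $\{u = 0\} \cap \{x_n \ge -\delta\} \to \{0\}$ as $\delta \to 0$, which forces $\Sigma_\epsilon$ to shrink transversally as $\epsilon \to 0$. On the portion $\{x_n > 0\}$, where $\det D^2 u = 1$ and $u$ is smooth and strictly convex, interior regularity localizes $\{u - x_n < \epsilon\}$ quantitatively near $0$. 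The main obstacle is to make these transverse shrinkage rates effective enough to beat $\epsilon^{n/2}$; this quantification is the technical heart of Savin's argument for Proposition $2.8$ and is the only non-routine step in its adaptation to our localized setting.
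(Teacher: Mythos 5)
Your setup is sound and matches the intended strategy: after normalizing so that $p=0$, $u\ge (x_n)_+$, and $\{x_n=0\}\cap\{u=0\}=\{0\}$ (via Proposition~\ref{StrictConvexity}), one wants a volume contradiction using Lemma~\ref{VolumeGrowth} on sections of $v:=u-x_n$. This is indeed the spirit of Savin's Proposition~2.8, which the paper also invokes rather than reproves. However, two issues prevent this from being a proof.

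First, the claim that ``$\{v=0\}$ consists of at most two isolated points'' cannot be right as stated: $\{v=0\}$ is a closed convex set (the contact set of the convex function $v\ge 0$ with its minimum), so if it contains $0$ and any second point it contains the entire segment between them. What you actually need is $\{v=0\}=\{0\}$, i.e.\ that $u$ is not affine along any segment issuing from $0$ into $\{u>0\}$. Ruling this out is not automatic: by Caffarelli's theory, an Alexandrov solution of $\det D^2u=1$ on an open set can be affine along a segment, provided the maximal such segment runs to the boundary; here the boundary includes $\partial\Omega$, so a segment of $\{v=0\}$ from $0$ reaching $\partial\Omega$ is not immediately excluded. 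Your assertion that $v\ge m_r>0$ on $\partial B_r(0)$, which is what makes $\Sigma_\epsilon\cap B_r(0)$ compactly contained and lets you invoke Lemma~\ref{VolumeGrowth}, rests on exactly this unproved point.

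Second, and more fundamentally, the competing upper bound $|\Sigma_\epsilon\cap B_r(0)|=o(\epsilon^{n/2})$ is not established; you explicitly defer it as ``the technical heart of Savin's argument.'' This is not a minor omission — it is the entire content of the proposition. A crude balance of scales even suggests the estimate is borderline (the section has $x_n$-extent of order $\epsilon$ on the negative side and possibly order $\sqrt\epsilon$ on the positive side, with transverse extent controlled only qualitatively by the strict-convexity statement $S\cap\{u=0\}=\{0\}$), so one must genuinely extract a quantitative gain from strict convexity, typically via a normalization/compactness argument on the sections. As written, the proposal identifies the right ingredients (Proposition~\ref{StrictConvexity}, the section $\Sigma_\epsilon$, Lemma~\ref{VolumeGrowth}) but stops precisely at the step where the work is, so it does not constitute a proof.
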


\begin{rem}
These regularity results still hold when we replace the right hand side by $f\,\chi_{\{u > 0\}}$, for any function $f$ that is bounded above and below by positive constants.
\end{rem}

\section{Global Obstacle Problem}\label{PolyObstacle}
In this section we let $\Omega \subset \mathbb{R}^n$ be a compact convex polytope of dimension $d$, and we denote by $\Gamma_k$ its $k$-skeleton (that is, the union of its faces of dimension at most $k$). With the convention that $\Gamma_k = \emptyset$ when $k < 0$, we let
$$S_k := \Gamma_k \backslash \Gamma_{k-1}$$
be the union of the interiors of its $k$-dimensional faces. Finally, we assume that the $d$-dimensional interior of $\Omega$ contains the origin, and when we write $\mathbb{R}^n = \mathbb{R}^{n-d} \times \mathbb{R}^{d}$ that $\Omega \subset \{0\} \times \mathbb{R}^d$.

We define the convex function $P$ on $\mathbb{R}^n$ by
\begin{equation}\label{PDef}
P(x) := \begin{cases}
            0, \quad x \in \Omega \\
            +\infty, \quad x \in \mathbb{R}^n \backslash \Omega,
\end{cases}
\end{equation}
and we denote by $P^*$ its Legendre transform
\begin{equation}\label{PStarDef}
P^*(x) := \sup_{y \in \mathbb{R}^n}\left( y \cdot x - P(y)\right) = \sup_{y \in \Omega} (y \cdot x).
\end{equation}
The function $P^*$ is one-homogeneous, nonnegative, and convex, and the set
$$\Omega^* := \{P^* \leq 1\}$$ 
is the convex dual of $\Omega$. We note that $\Omega^* = \mathbb{R}^{n-d} \times \tilde{\Omega}^*$, where $\tilde{\Omega}^*$ (the $d$-dimensional convex dual of $\Omega$) is a compact convex polytope with the origin in its interior. 

We let $\Gamma_j^*$ denote the $j$-skeleton of $\Omega^*$ (note that $\Gamma_j^* = \emptyset$ when $j < n-d$),
and we let $S_j^* = \Gamma_j^* \backslash \Gamma_{j-1}^*$ denote the union of the interiors of the $j$-dimensional faces of $\Omega^*$. Finally, we let
$\Sigma_l = \emptyset$ for $l < n-d$, and we let
$$\Sigma_l := \begin{cases}
\mathbb{R}^{n-d} \times \{0\}, \quad l = n-d, \\
\{tx : t > 0 \text{ and } x \in S_{l-1}^*\}, \quad n-d < l \leq n
\end{cases}$$    
be the cone over $S_{l-1}^*$ in $\mathbb{R}^n$. We observe for $l \geq n-d$ that $\text{dim}(\Sigma_l) = l$, and that $P^*$ is linear when restricted to any connected component of $\Sigma_l$.

\begin{ex}
If $\Omega = \{0\}$, then $P^* = 0$ and $\Sigma_n = \mathbb{R}^n$.
\end{ex}

\begin{ex}
When $\Omega$ is the line segment connecting $\pm e_n$ we have 
$$P^* = |x_n|, \quad \Sigma_n = \{x_n > 0\} \cup \{x_n < 0\}, \quad \text{ and } \quad \Sigma_{n-1} = \{x_n = 0\}.$$
\end{ex}

\begin{ex}
When $\Omega$ is a regular tetrahedron in $\mathbb{R}^3$ centered at the origin, so is $\Omega^*$. In this case $\Sigma_1$ consists of the four open rays over the vertices of $\Omega^*$, $\Sigma_2$ consists of the six open planar sectors over the edges of $\Omega^*$, and finally $\Sigma_3$ consists of the four open solid sectors over the faces of $\Omega^*$.
\end{ex}

In this section we construct global solutions to an obstacle problem with $P^*$ as the obstacle. We first define what it means to be a global solution to the obstacle problem.

\begin{defn}
We say that a convex function $u^*: \mathbb{R}^n \rightarrow \mathbb{R}$ is a global solution to the obstacle problem with obstacle $P^*$ if $u^* \geq P^*$, $\det D^2u^* \leq 1$ in the Alexandrov sense, and $\det D^2u^* = 1$ in $\{u^* > P^*\}$. We let 
$$K := \{u^* = P^*\}$$ 
denote the contact set.
\end{defn}

\begin{ex}
The function $W_n$ defined by (\ref{FlatObstacle}) is a global solution to the obstacle problem with $P^* = 0$ and $K = B_1$.
\end{ex}

\noindent Below we say that $K$ has nonempty interior in a set $S$ if $S$ contains interior points of $K$. Our main result of this section is:

\begin{prop}\label{GlobalObstacleProblem}
There exists a global solution $u^*$ to the obstacle problem with obstacle $P^*$ such that $K$ is compact, has nonempty interior in each connected component of $\Sigma_k$ for $k > \frac{n}{2}$,
and $\Sigma_k \subset \mathbb{R}^n \backslash K$ for all $k \leq \frac{n}{2}$.
\end{prop}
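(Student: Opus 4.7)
The plan is to construct $u^*$ as a limit of solutions to the obstacle problem on a sequence of balls of increasing radius. Fix a constant $C_0 > 0$ large enough that $W_n + C_0 \geq P^*$ on $\mathbb{R}^n$, and for each $R \gg 1$ apply Proposition \ref{OP} with $\Omega = B_R$, obstacle $\psi = P^*$, boundary data $\varphi_R = (W_n + C_0)|_{\partial B_R}$, and measure $\mu = dx$, obtaining a convex solution $u^*_R$ on $\overline{B_R}$. Since $W_n + C_0$ itself lies in $\mathcal{F}$ by (\ref{FlatSuper}), we have $P^* \leq u^*_R \leq W_n + C_0$; on the other hand, Alexandrov comparison of $u^*_R$ with the radial smooth solution $\tfrac{1}{2}|x|^2 + c$ of $\det D^2 = 1$ (for $c$ chosen using (\ref{GrowthRadial}) so that $\tfrac{1}{2}|x|^2 + c \leq \varphi_R$ on $\partial B_R$) yields the lower bound $u^*_R \geq \tfrac{1}{2}|x|^2 + c$. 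These bounds are uniform in $R$, and Arzel\`a--Ascoli for convex functions combined with the weak continuity of the Monge--Amp\`ere operator under locally uniform convergence then produces a subsequential locally uniform limit $u^*$ that is a global solution to the obstacle problem.

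Compactness of $K$ is immediate from the two-sided bounds: $u^*(x) \geq \tfrac{1}{2}|x|^2 + c$ grows quadratically while $P^*(x) \leq (\sup_\Omega |y|)\,|x|$ grows linearly, so $u^* > P^*$ outside a ball.

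For the property $\Sigma_l \cap K = \emptyset$ when $l \leq n/2$, I would invoke Corollary \ref{PropagationCor}. For $p$ in the relative interior of a component of $\Sigma_l$, the set $\partial P^*(p) = \{y \in \Omega : y \cdot p = P^*(p)\}$ is precisely the face of $\Omega$ dual to the face of $\Omega^*$ whose cone contains $p$; polytope duality gives $\dim \partial P^*(p) = n - l$. Since $u^* \geq P^*$ with equality on $K$, any $p \in K$ satisfies $\partial P^*(p) \subset \partial u^*(p)$, hence $\dim \partial u^*(p) \geq n - l$. But $u^*$ has $\det D^2 u^* \leq 1$ globally and $\det D^2 u^* = 1$ on the non-empty open set $\{u^* > P^*\}$, so Corollary \ref{PropagationCor} forces $\dim \partial u^*(p) < n/2$; we conclude $n - l < n/2$, i.e., $l > n/2$.

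The main obstacle is the remaining property: $K$ has non-empty relative interior in each connected component $C$ of $\Sigma_k$ for $k > n/2$. The plan here is to exhibit, for each such $C$ and a suitably chosen interior point $p \in C$, an admissible competitor of the form
$$v(x) = l_C(x) + \lambda^{-2} W_k\bigl(\lambda(x_V - p)\bigr) + \lambda^{-2} W_{n-k}\bigl(\lambda\, x_{V^\perp}\bigr),$$
where $V = \operatorname{span}(C)$ is the $k$-dimensional linear span of $C$, $l_C$ is the linear function equal to $P^*$ on $C$, and $\lambda > 0$ is large. The block-diagonal structure of $D^2 v$ together with (\ref{FlatSuper}) yields $\det D^2 v \leq 1$, and $v$ coincides with $l_C = P^*$ on a $k$-dimensional ball inside $C$ centered at $p$, provided $1/\lambda$ is smaller than the distance from $p$ to $\partial C$ within $V$. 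Passing to the limit in the comparison $u^*_R \leq v$ on $B_R$ (via Remark \ref{ModifiedPerron}) would then give $u^* = P^*$ on this ball, establishing the property. The two technical points that need care are verifying $v \geq P^*$ globally---which amounts to the quadratic growth of the two $W$-terms absorbing each linear difference $(q_j - q_C) \cdot x$ arising from the other vertices $q_j$ of $\Omega$, achievable by taking $p$ deep enough in $C$---and adjusting the asymptotics of $v$ to dominate $\varphi_R$ on $\partial B_R$, which can be arranged by linear or constant corrections that do not affect $D^2 v$ and by choosing $C_0$ appropriately.
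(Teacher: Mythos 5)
Your overall scheme (solve in $B_R$ with obstacle $P^*$, pass to a limit, get compactness of $K$ from quadratic versus linear growth, and use Corollary~\ref{PropagationCor} to exclude $K$ from the low-dimensional strata $\Sigma_l$, $l \le n/2$) is sound and matches the structure of the paper. The gap is in the barrier you propose for showing that $K$ has nonempty interior in a component $C$ of $\Sigma_k$ when $\tfrac{n}{2} < k < n$. Near a point $p$ in the relative interior of $C$ the obstacle behaves, in suitable split coordinates $x = (z,y)$ with $z \in V$ and $y \in V^\perp$, like $P^*(x) = L(z) + M(y)$ where $L$ is affine and $M$ is a support function with $M(0)=0$: thus $P^* - l_C$ vanishes on $V$ but has a genuine Lipschitz kink in the $V^\perp = \mathbb{R}^{n-k}$ directions, growing like $c\,|y|$ with $c>0$ (because the face $\partial P^*(p)$ of $\Omega$ has positive dimension $n-k$ and $q_C$ is an interior point of it). Your term $\lambda^{-2}W_{n-k}(\lambda\, x_{V^\perp})$ is identically zero on the slab $\{|x_{V^\perp}| \le 1/\lambda\}$ and lifts off only at rate $O\bigl((|x_{V^\perp}|-1/\lambda)^{1+\frac{1}{n-k}}\bigr)$, which is $o(|y|)$. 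So $v < P^*$ in a punctured neighborhood of $p$ off $V$, and $v$ is not admissible for any $\lambda$ or any choice of $p$ interior to $C$. Taking $p$ ``deeper in $C$'' moves $p$ within $V$ and does nothing for the perpendicular kink, so the stated fix does not apply.

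This is exactly the problem the barriers $w_{n,k}$ of Section~\ref{Barriers} are built to solve: $w_{n,k}$ has $\det D^2 w_{n,k} \le 1$, vanishes at the origin, and crucially satisfies the Lipschitz lower bound $w_{n,k} \ge c(n,k)|y|$ of (\ref{LipschitzSing}), so it \emph{can} dominate the Lipschitz kink of $P^* - l_C$. Even then one needs the Lipschitz constant of $M$ to be smaller than $c(n,k)$; the paper achieves this by first replacing $P^*$ by $\delta P^*$ for small $\delta$ (and scaling back at the end), a normalization your proposal omits but which is essential. So the proposal needs to swap the block-diagonal competitor $W_k \oplus W_{n-k}$ for the comparison with $L(z) + w_{n,k}(x)$ as in the paper, and add the $\delta$-rescaling of the obstacle. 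The case $k=n$ (where there is no $V^\perp$ direction) is the only one where a $W_n$-type barrier alone has a chance, but that case alone does not give the proposition.
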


\begin{ex}
When $\Omega$ is the line segment connecting $\pm e_n$, the contact set $K$ is the union of two compact convex sets with nonempty interior, one in $\{x_n \geq 0\}$ and the other in $\{x_n \leq 0\}$. When $n \leq 2$ these sets are disjoint and do not meet $\{x_n = 0\}$, but when $n \geq 3$ these sets meet along a convex set in $\{x_n = 0\}$ that contains interior points of $K$ (see Figure \ref{Fig1}).
\end{ex}


\begin{figure}
 \centering
    \includegraphics[scale=0.28, trim={0 35mm 0 10mm}]{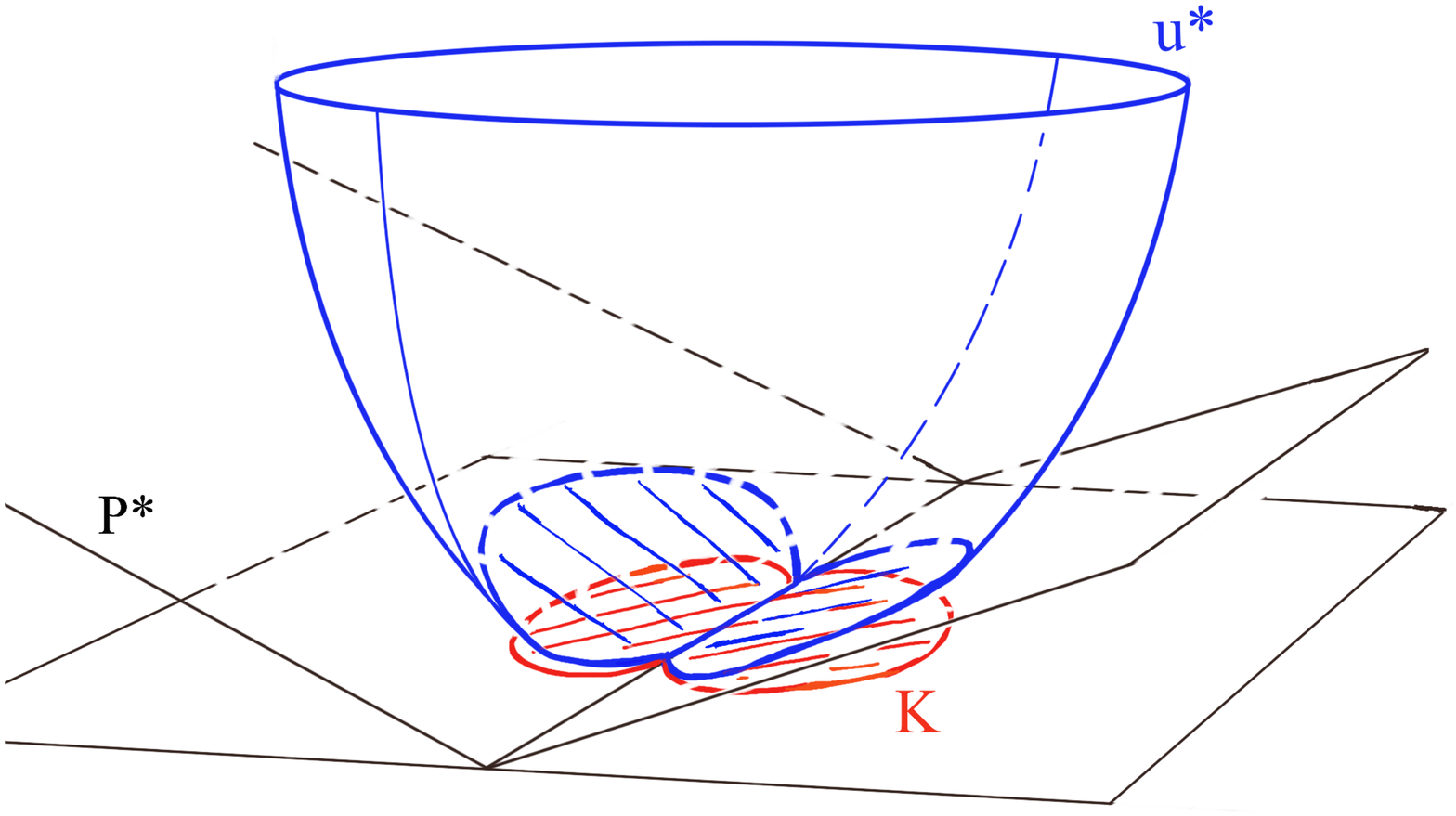}
 \caption{}
\label{Fig1}
\end{figure}

\begin{ex}
When $\Omega$ is a regular tetrahedron in $\mathbb{R}^3$ centered at the origin, the set $K$ is the union of four compact convex sets with nonempty interior, each intersecting one connected component of $\Sigma_3$. Each of these sets meets all three of the others along two-dimensional convex sets in the planar sectors that comprise $\Sigma_2$, but they do not intersect the origin or the rays that comprise $\Sigma_1$ (see Figure \ref{Fig2}).
\end{ex}

\begin{figure}
 \centering
    \includegraphics[scale=0.28, trim={0 25mm 0 10mm}]{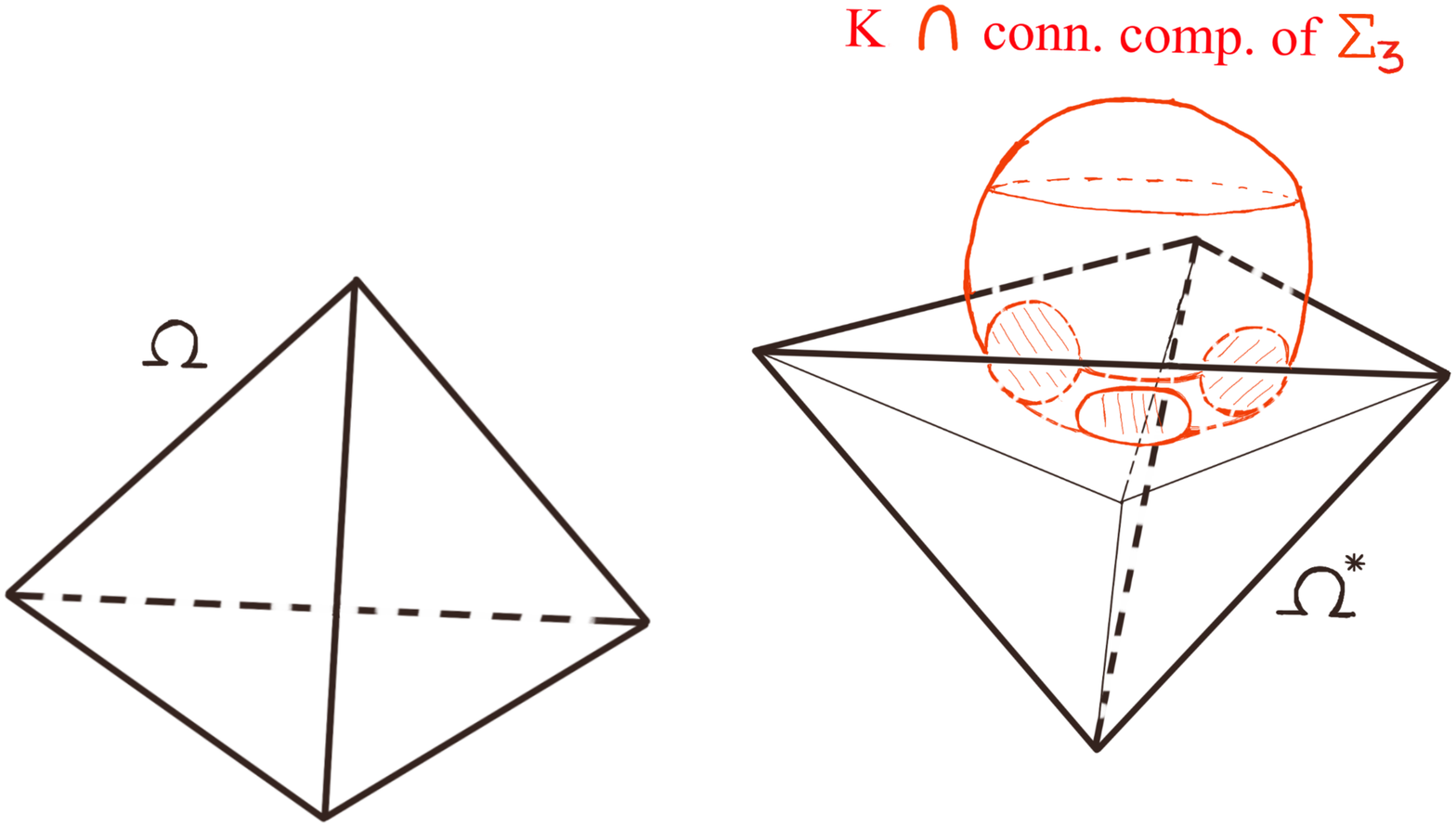}
 \caption{}
\label{Fig2}
\end{figure}

\noindent To conclude the section, we prove the proposition.

\begin{proof}[{\bf Proof of Proposition \ref{GlobalObstacleProblem}}]
It suffices to prove Proposition \ref{GlobalObstacleProblem} with the obstacle $\delta P^*$ for any fixed $\delta > 0$, since then $\delta^{-2}u^*(\delta x)$ is a global solution to the obstacle problem with obstacle $P^*$ and satisfies the remaining conditions.

Let
$$\varphi := W_n - 1 \text{ on } \mathbb{R}^n.$$
For $\epsilon > 0$ to be chosen, we can take $\delta$ small so that
$$W_n + \epsilon > \delta P^* \text{ on } \mathbb{R}^n.$$
Here and below we assume that $R >> 1$, and we denote by $u^*_R$ the solution to the obstacle problem in $B_R$ with obstacle $\delta P^*$, boundary data $\varphi$, and measure $\mu = dx$. Since $\det D^2 W_n \leq 1$, we have (see Remark \ref{ModifiedPerron}) that
$$u^*_R \leq W_n + \epsilon \text{ in } B_R.$$
Since $\varphi < 0 \leq \delta P^* \leq u^*_R$ in $B_1$ and $\det D^2 \varphi = 1$ outside of $B_1$, we have by the maximum principle that
$$\varphi \leq u^*_R \text{ in } B_R.$$
Up to taking a subsequence, the functions $u^*_R$ thus converge locally uniformly as $R \rightarrow \infty$ to a global solution $u^*$ to the obstacle problem with obstacle $\delta P^*$, which satisfies
\begin{equation}\label{GrowthGlobal}
\varphi \leq u^* \leq W_n + \epsilon \text{ on } \mathbb{R}^n.
\end{equation}
In particular,
\begin{equation}\label{FlatB1}
0 \leq \delta P^* \leq u^* \leq \epsilon \text{ in } B_1.
\end{equation}

We now examine $K = \{u^* = \delta P^*\}$. There exists $r_0 \in (0,\,1/2)$ depending only on $\Omega^*$ such that, for all $j \geq 1$, each (nonempty) connected component of $\Sigma_{j}$ contains a point $p \in \partial B_{1/2}$ such that $B_{r_0}(p)$ does not intersect any other connected component of $\Sigma_j$. Fix $n-k \geq 1$ and pick any such point $p \in \Sigma_{n-k}$. Then by the inequality (\ref{FlatB1}) the functions
$$Q(x) := r_0^{-2}\delta P^*(p + r_0\,x), \quad v(x) := r_0^{-2}u^*(p + r_0\,x)$$
satisfy
$$0 \leq Q \leq v \leq r_0^{-2}\epsilon \text{ in } B_1.$$ 
Furthermore, we may choose coordinates $x = (z,\,y)$ with $z \in \mathbb{R}^{n-k}$ and $y \in \mathbb{R}^k$, such that 
$$Q(x) = L(z) + M(y) \text{ in } B_1,$$ 
where $L$ is affine and $M(0) = 0$. Assume now that $k < \frac{n}{2}$, and let
$$w(x) := L(z) + w_{n,\,k}(x).$$
Since $w_{n,\,k}(x) \geq c(n,\,k)|y|$ (we take the right side to be $0$ when $k = 0$), we have for $\epsilon$ sufficiently small that 
$$w \geq Q \text{ in } B_1.$$ 
Furthermore, since 
$$w_{n,\,k} > c(n,\,k) > 0 \text{ on } \partial B_1,$$ 
we have for $\epsilon$ sufficiently small that 
$$w > \epsilon r_0^{-2} \geq v \text{ on } \partial B_1.$$ We conclude
that if $v(0) > Q(0) = w(0),$ then for some $h > 0$ the function $w + h$ touches $v$ from above somewhere in $\{v > Q\} \cap B_1$, which by (\ref{SuperIneq}) 
violates the maximum principle. By translating the origin a little and applying a similar argument e.g. in $B_{1/2}$,
we see that in fact $v = Q$ in a neighborhood of the origin, hence $u^* = \delta P^*$ in a neighborhood of $p$. This shows that $K$ has nonempty interior on each connected component
of $\Sigma_{n-k}$ when $k < \frac{n}{2}$.

Finally, assume by way of contradiction that $K$ contains a point $p \in \Sigma_{n-k}$ with $k \geq \frac{n}{2}$. Then 
$$\text{dim}(\partial u^*(p)) \geq \text{dim}(\partial P^*(p)) = k \geq \frac{n}{2},$$ 
so by Corollary \ref{PropagationCor}, $\det D^2 u^* \equiv 0$. This contradicts the inequality (\ref{GrowthGlobal}), which implies in particular that $u^*$ has a bounded sub-level set, in which it must have positive Monge-Amp\`{e}re mass by the Alexandrov maximum principle. The first inequality in (\ref{GrowthGlobal}) also implies that $K$ is compact, since $\varphi > \delta P^*$ outside a large ball, and this completes the proof.
\end{proof}

\section{Proof of Theorem \ref{Main}}\label{LowD}

In this section we let $u^*$ be a global solution to the obstacle problem with obstacle $P^*$, as obtained in Proposition \ref{GlobalObstacleProblem}. We will show in dimensions $n = 3$ and $4$ that the Legendre transform of $u^*$ satisfies the conditions of Theorem \ref{Main}. We first prove a more general result that holds in any dimension:

\begin{prop}\label{RegProp}
If $\partial u^* = \partial P^*$ on $K$, then the Legendre transform $u$ of $u^*$ satisfies the conditions of Conjecture \ref{nDMain}.
\end{prop}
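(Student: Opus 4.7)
The plan is to let $u := (u^*)^*$ denote the Legendre transform of $u^*$ and verify the three conditions of Conjecture \ref{nDMain} in turn: that $u$ vanishes on $\Gamma_{\lceil n/2 - 1 \rceil}$, is smooth on its complement, and has Monge-Amp\`ere measure $dx + \sum_{q \in \Gamma_0} a_q\,\delta_q$. First I would identify $\partial P^*(K)$ combinatorially: under the polar duality between faces of $\Omega$ and faces of $\Omega^*$ built in Section \ref{PolyObstacle}, the subgradient $\partial P^*$ sends the relative interior of each connected component of $\Sigma_l$ onto the face of $\Omega$ of dimension $n-l$ that is dual to the corresponding face of $\Omega^*$. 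Combined with Proposition \ref{GlobalObstacleProblem} --- that $K$ has nonempty interior on every $\Sigma_l$ with $l > n/2$ and avoids $\Sigma_l$ for $l \leq n/2$ --- this yields
$$\partial P^*(K) \;=\; \bigcup_{l > n/2}\{\text{faces of }\Omega\text{ of dimension }n-l\} \;=\; \Gamma_{\lceil n/2 - 1\rceil}.$$
The hypothesis then gives $\partial u^*(K) = \Gamma_{\lceil n/2 - 1\rceil}$, and for $p \in \partial u^*(x) = \partial P^*(x)$ with $x \in K$, the $1$-homogeneity of $P^*$ and $p \in \Omega$ force $p \cdot x = P^*(x)$, so $u(p) = p \cdot x - u^*(x) = p \cdot x - P^*(x) = 0$. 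Hence $\Gamma_{\lceil n/2-1\rceil} \subset \{u = 0\}$.

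Next I would show $u \in C^\infty(\mathbb{R}^n \setminus \Gamma_{\lceil n/2 - 1\rceil})$, which by Legendre duality reduces to showing that $u^*$ is $C^\infty$ and strictly convex on $\mathbb{R}^n \setminus K$. Smoothness is classical interior regularity for $\det D^2 u^* = 1$ on the open set $\{u^* > P^*\}$ once strict convexity is in hand. For strict convexity, suppose for contradiction that $u^*$ has a flat piece $F = \{u^* = p\cdot x + c\}$ of dimension $\geq 1$ meeting $\mathbb{R}^n \setminus K$. In the case $F \cap K \neq \emptyset$, pick $x \in F \cap K$; the hypothesis gives $p \in \partial u^*(x) = \partial P^*(x) \subset \Omega$, so $p \cdot x = P^*(x) = u^*(x) = p \cdot x + c$ forces $c = 0$, and then $u^*(y) = p \cdot y$ on $F$ together with the global bound $u^*(y) \geq P^*(y) \geq p \cdot y$ (using $p \in \Omega$) gives $u^* = P^*$ on all of $F$, i.e.\ $F \subset K$, contradicting that $F$ meets $\mathbb{R}^n \setminus K$. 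In the case $F \cap K = \emptyset$, the quadratic growth of $u^*$ forces $F$ to be bounded and hence compactly contained in $\{u^* > P^*\}$; this contradicts Caffarelli's strict-convexity theorem for Monge-Amp\`ere equations with a positive right-hand side, by which any nontrivial flat piece must have an extremal point on the boundary of the domain (here $K$). So $u^*$ is strictly convex off $K$.

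Finally I would compute the Monge-Amp\`ere measure of $u$. On $\mathbb{R}^n \setminus \Gamma_{\lceil n/2 - 1 \rceil}$, the Legendre formula $\det D^2 u \cdot \det D^2 u^* = 1$ gives $\det D^2 u = 1$. For each vertex $q \in \Gamma_0$, the subgradient $\partial u(q) = \{x : q \in \partial u^*(x)\}$ equals $K$ intersected with the closure of the component of $\Sigma_n$ over $q$ (the region where $P^* = q \cdot x$); by Proposition \ref{GlobalObstacleProblem} this has positive $n$-dimensional Lebesgue measure $a_q$, contributing the Dirac mass $a_q \delta_q$. For $p$ in the relative interior of a $k$-face of $\Omega$ with $1 \leq k \leq \lceil n/2 - 1\rceil$, the set $\partial u(p) = K \cap \{x : p \in \partial P^*(x)\}$ lies in an $(n-k)$-dimensional cone in $\mathbb{R}^n$, hence has zero $n$-Lebesgue measure and contributes no Dirac mass. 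Summing, $\det D^2 u = 1 + \sum_{q \in \Gamma_0} a_q\,\delta_q$, completing the verification.

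The main obstacle is the strict-convexity claim for $u^*$ on $\mathbb{R}^n \setminus K$, particularly the subcase $F \cap K = \emptyset$, where a Caffarelli-type strict convexity theorem for $\det D^2 = 1$ is needed on the (unbounded) domain $\{u^* > P^*\}$; the quadratic growth of $u^*$ localizes the argument by forcing any candidate flat piece to be bounded. The other subcase $F \cap K \neq \emptyset$, which is where the hypothesis $\partial u^* = \partial P^*$ on $K$ does its real work, is by contrast handled by a short algebraic rigidity argument.
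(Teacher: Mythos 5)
Your proposal is correct and follows essentially the same approach as the paper's proof. Both arguments rest on the same two pillars: (i) that the hypothesis $\partial u^* = \partial P^*$ on $K$ forbids $u^*$ from having a flat piece with an endpoint in $K$ (which, combined with quadratic growth and the Cheng--Yau/Caffarelli strict convexity theory on $\{u^* > P^*\}$, makes $u^*$ smooth and strictly convex off $K$), and (ii) that $\partial u(S_k)$ is contained in finitely many $(n-k)$-dimensional normal cones, so the Dirac masses arise only at vertices. The paper cites \cite{CY} and states the flat-segment dichotomy in one line, whereas you split it explicitly into the subcases $F \cap K \neq \emptyset$ (the algebraic rigidity) and $F \cap K = \emptyset$ (bounded flat piece contradicting interior strict convexity); this is the same content, unpacked, and your algebraic computation at the endpoint in $K$ is precisely what the paper's terse ``this contradicts that $\partial u^* = \partial P^*$'' is referring to. Your verification that $u=0$ on $\Gamma_{\lceil n/2-1\rceil}$ via Fenchel--Young and $1$-homogeneity is a slightly more explicit rendering of the paper's chain $u = u^{**} = P^{**} = P = 0$.
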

\begin{proof}
We first claim that $u^*$ is smooth on $\mathbb{R}^n \backslash K$. If not, then by the results in \cite{CY} $u^*$ is affine along a line segment in $\mathbb{R}^n \backslash K$ that has an endpoint in $K$. At this endpoint, this contradicts that $\partial u^* = \partial P^*$. In particular, $\nabla u^*$ is a smooth measure-preserving diffeomorphism between $\mathbb{R}^n \backslash K$ and $\mathbb{R}^n \backslash \partial u^*(K)$. Since
$$\partial u^*(K) = \partial P^*(K) = \Gamma_{\left\lceil \frac{n}{2} - 1\right\rceil},$$
we conclude that $\nabla u$ is a smooth measure-preserving diffeomorphism between $\mathbb{R}^n \backslash \Gamma_{\left\lceil \frac{n}{2} - 1\right\rceil}$ and $\mathbb{R}^n \backslash K$. We also have on $\partial u^*(K) = \partial P^*(K) = \Gamma_{\left\lceil \frac{n}{2} - 1\right\rceil}$ that
$$u = u^{**} = P^{**} = P = 0.$$
It only remains to show that $\det D^2u = 1$ in the Alexandrov sense away from $\Gamma_0$, where it has Dirac masses. To see this, we note for $k < \frac{n}{2}$ that $\partial u(S_k)$ is contained in a finite union of codimension $k$ planes, hence it has measure zero for $k \geq 1$. Thus, for any Borel set $A \subset \mathbb{R}^n$ we have
\begin{align*}
|\partial u(A)| &= \left|\partial u\left(A \cap \Gamma_{\left\lceil \frac{n}{2} - 1\right\rceil}\right)\right| + \left|\nabla u\left(A \backslash \Gamma_{\left\lceil \frac{n}{2} - 1\right\rceil}\right)\right| \\
&= |\partial u (A \cap \Gamma_0)| + \left|A \backslash \Gamma_{\left\lceil \frac{n}{2} - 1\right\rceil}\right| \\
&= |\partial P (A \cap \Gamma_0) \cap K| + |A|,
\end{align*}
and the first term is positive if and only if $A \cap \Gamma_0$ is non-empty.
\end{proof}

We now specialize to dimension $n \leq 4$:

\begin{lem}\label{LowDRegProp}
Let $u^*$ be a global solution to the obstacle problem as obtained in Proposition \ref{GlobalObstacleProblem}, and assume $n \leq 4$. Then $\partial u^* = \partial P^*$ on $K$.
\end{lem}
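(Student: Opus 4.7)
By Proposition \ref{GlobalObstacleProblem}, the contact set $K$ is compact, has nonempty interior in each connected component of $\Sigma_l$ for $l > n/2$, and misses $\Sigma_l$ entirely for $l \le n/2$. For $n \le 4$ this forces $K \subseteq S_{n-1} \cup S_n$. For each chamber (i.e., connected component of $\Sigma_n$) indexed by $i$, let $z_i \in \Omega$ denote the vertex of $\Omega$ such that $P^* = L_i$ on the closure of chamber $i$, where $L_i(q) := z_i \cdot q$, and let $K_i := K \cap \overline{\mathrm{chamber}_i} = \{v_i = 0\}$ for $v_i := u^* - L_i$. Each $K_i$ is convex with nonempty interior in chamber $i$, and $v_i$ satisfies $v_i \ge 0$ and $\det D^2 v_i = \chi_{\{v_i > 0\}}$ in chamber $i$. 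I would prove the lemma by first treating $p \in S_{n-1} \cap K$, then using that result to handle $p \in S_n \cap K$.

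\emph{Step 1:} For $p \in S_{n-1} \cap K$, the point $p$ lies on the wall $\Sigma_{n-1}$ between two chambers $i$ and $j$, and $\partial P^*(p) = [z_i, z_j]$ is the edge of $\Omega$ joining these vertices. By Corollary \ref{PropagationCor}, $\dim \partial u^*(p) < n/2 \le 2$, so $\partial u^*(p)$ is at most $1$-dimensional; containing the segment $[z_i, z_j]$, it must be a segment lying on the affine line through $z_i$ and $z_j$. Write $y = z_i + t(z_j - z_i) \in \partial u^*(p)$. To rule out $t > 1$, note that the supporting affine function $L(q) := P^*(p) + y\cdot(q-p) \le u^*$ satisfies $L - L_j = (t-1)(z_j - z_i)\cdot(\cdot - p)$; since $z_j - z_i$ is a positive multiple of the normal to the wall pointing from chamber $i$ into chamber $j$, this quantity is strictly positive at interior points of chamber $j$ near $p$. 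But $p \in K_j$ and $K_j$ has nonempty interior, so there exist $q \in \mathrm{int}(K_j)$ arbitrarily close to $p$ with $L(q) > L_j(q) = u^*(q)$, contradicting $L \le u^*$. The case $t < 0$ is symmetric using chamber $i$. Hence $\partial u^*(p) \subseteq [z_i, z_j] = \partial P^*(p)$.

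\emph{Step 2:} For $p \in S_n \cap K$, $p$ is interior to some chamber $i$ and $\partial P^*(p) = \{z_i\}$. Fix a bounded convex subdomain $\Omega' \subset \mathrm{chamber}_i$ containing $K_i$ in its interior (possible since $K$ is compact). Suppose $y \in \partial u^*(p) \setminus \{z_i\}$; then $y - z_i \in \partial v_i(p)\setminus\{0\}$ and the hyperplane $H_y := \{q : (y - z_i)\cdot(q - p) = 0\}$ supports $K_i = \{v_i = 0\}$ at $p$. Proposition \ref{StrictConvexity} (applied to $v_i$ on $\Omega'$) gives two alternatives. If $H_y \cap K_i = \{p\}$, then $p$ is extremal in $K_i$ inside $\Omega'$, and Proposition \ref{ExtremalC1} forces $\partial v_i(p) = \{0\}$, hence $y = z_i$ --- a contradiction. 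Otherwise $H_y \cap K_i$ has no extremal points in $\Omega'$, so some extremal point $p'$ of $H_y \cap K_i$ lies on $\partial(\mathrm{chamber}_i) \cap K$; by the structure of $K$ for $n \le 4$, necessarily $p' \in S_{n-1} \cap K$. On the segment $[p, p'] \subset K_i \cap H_y$ we have $u^* = L_i$ and $L - L_i \equiv 0$ (as $L - L_i = (y - z_i)\cdot(\cdot - p)$ vanishes on $H_y$), so $L = u^*$ on $[p, p']$ and $y \in \partial u^*(p')$. Step 1 gives $y \in [z_i, z_j]$, hence $y = z_i + s(z_j - z_i)$ for some $s$; the constraint $(y - z_i)\cdot(p' - p) = 0$ becomes $s(z_j - z_i)\cdot(p' - p) = 0$, and because $z_j - z_i$ is a nonzero multiple of the wall normal while $p' - p$ has nontrivial component in that normal direction (pointing from the interior of chamber $i$ onto $\Sigma_{n-1}$), we must have $s = 0$ and $y = z_i$ --- again a contradiction.

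The main technical obstacle is the reduction to a bounded setting where Propositions \ref{StrictConvexity} and \ref{ExtremalC1} apply; this uses compactness of $K$ together with the nonempty-interior property from Proposition \ref{GlobalObstacleProblem}. The restriction $n \le 4$ is used in exactly one place: the bound $\dim \partial u^*(p) \le 1$ from Corollary \ref{PropagationCor} on the stratum $S_{n-1}$, which containing a $1$-dimensional edge forces $\partial u^*(p)$ into a single line. For $n \ge 5$, a stratum $S_l$ with $n/2 < l < n - 1$ carries $\dim \partial P^*(p) \ge 2$, and the bound $\dim \partial u^*(p) < n/2$ no longer pins $\partial u^*(p)$ to a lower-dimensional subspace, so this direct approach breaks down.
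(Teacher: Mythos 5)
Your argument follows the paper's own proof closely: the same two-case split (points of $K$ on the walls, then points in the open chambers), the same use of Corollary~\ref{PropagationCor} to pin $\partial u^*$ to a line when $n \le 4$, and the same reliance on Propositions~\ref{StrictConvexity} and~\ref{ExtremalC1} together with the nonempty-interior property of $K$ from Proposition~\ref{GlobalObstacleProblem}. In Step~1 your contradiction (a point of $\mathrm{int}(K_j)$ in chamber $j$ where $L > L_j = u^*$) is equivalent to the paper's observation that such a subgradient forces $u^* > P^*$ throughout a half-space while $K$ has interior points there. In Step~2 your final contradiction via the orthogonality $(y-z_i)\cdot(p'-p)=0$ is a valid alternative to the paper's ``$u^*(0)>0$'' contradiction. (Also a small notational slip: you write $S_{n-1},S_n$ for the strata on the $u^*$-side, which the paper calls $\Sigma_{n-1},\Sigma_n$; the $S_k$ there are faces of $\Omega$.)

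There is, however, a genuine error in the setup of Step~2. You cannot choose a bounded convex domain $\Omega' \subset \mathrm{chamber}_i$ (the open chamber) that contains $K_i$ in its interior: for $n=3,4$ the set $K_i$ reaches the wall $\Sigma_{n-1}$, precisely because Proposition~\ref{GlobalObstacleProblem} gives $K$ nonempty interior on each wall component, so $K_i \not\subset \mathrm{chamber}_i$. Nor can you let $\Omega'$ cross the wall into an adjacent chamber $j$, since on $K_j$ one has $v_i>0$ but $\det D^2 u^*=0$, so $\det D^2 v_i = \chi_{\{v_i>0\}}$ fails there and Proposition~\ref{StrictConvexity} is not applicable. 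With your stated $\Omega'$ the alternative ``$H_y\cap K_i$ has no extremal points in $\Omega'$'' would be vacuous (a nonempty compact convex subset of $\Omega'$ must have extremal points in $\Omega'$), so your ``Otherwise'' branch can never fire and the non-exposed case is not actually handled. The fix is routine: apply Proposition~\ref{StrictConvexity} on a nested exhaustion $\Omega'_m \nearrow \mathrm{chamber}_i$ of the open chamber; its conclusion then says the extremal points of $H_y\cap K_i$ escape every $\Omega'_m$, hence lie on $\partial(\mathrm{chamber}_i)\cap K$, which is contained in $\Sigma_{n-1}\cap K$ because $K$ avoids the lower-dimensional strata. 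With that replacement the rest of your Step~2 goes through.
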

\begin{proof}
When $n \leq 2$ each connected component $K_0$ of $K$ is compactly contained in a connected component of $\Sigma_n$, where $P^*$ is linear. The result thus reduces to the case $P^* = 0$, treated in \cite{S}. (Or, apply Proposition \ref{StrictConvexity} to conclude that $K_0$ is strictly convex, then apply Proposition \ref{ExtremalC1}).

So assume that $n = 3$ or $n = 4$. Then are just two cases to consider: points in $\partial K \cap \Sigma_n$, and in $\partial K \cap \Sigma_{n-1}$.
Consider first a point $y \in \partial K \cap \Sigma_{n-1}$. After a translation we may assume that $y = 0$, and after rotating, adding an affine function, and quadratically rescaling, we may assume that $u^*(0) = 0$, that 
$$u^* \geq (a x_n)_+ = P^* \text{ in } B_1$$
for some $a > 0$, and that the interior of $K$ intersects $B_1 \cap \{x_n = 0\}$. We need to show that $\partial u^*(0)$ is the segment $[0,\,ae_n]$ connecting the origin and $a e_n$. It is clear that 
$$[0,\,ae_n] \subset \partial u^*(0).$$ 
Recall that by the first inequality in (\ref{GrowthGlobal}), $u^*$ has quadratic growth. It follows from Corollary \ref{PropagationCor} and the fact that $n \leq 4$ that $\partial u^*(0)$ is contained in the $x_n$-axis. Finally, $\partial u^*(0)$ cannot contain a point of the form $be_n$ with $b > a$ or $b < 0$, otherwise $u^* > P^*$ in one of $\{x_n > 0\}$ or $\{x_n < 0\}$, contradicting that $K$ contains a ball centered on $B_1 \cap \{x_n = 0\}$. Thus $\partial u^*(0) = [0,\,ae_n]$.

It only remains to consider a point $y \in \partial K \cap \Sigma_n$. Let $K_0$ denote the intersection of $K$ with the connected component of $\Sigma_n$ containing $y$. After subtracting a linear function we may assume that $u^* = 0$ on $K_0$. If $y$ is an extremal point of $K_0$, then the conclusion follows from Proposition \ref{ExtremalC1}. By Proposition \ref{StrictConvexity}, the alternative is that $y$ is in the interior of a segment in $\partial K_0$ that has an endpoint in $\Sigma_{n-1}$. Normalize the picture as in the first case so that the endpoint of this segment which lies in $\Sigma_{n-1}$ is the origin, $u^*$ is tangent from above to $P^*$ at the origin and $P^* = (a x_n)_+$ for some $a > 0$ in a neighborhood of the origin, and $y \in \{x_n < 0\}$. Since $y$ is in the interior of the segment we have 
$$\partial u^*(y) \subset \partial u^*(0) = [0, ae_n].$$ 
If $be_n \in \partial u^*(y)$ with $b > 0$, it follows that that $u^*(0) > 0$, a contradiction. We conclude that 
$$\partial u^*(y) = \{0\}$$ 
is a single point, as desired.
\end{proof}

\noindent Theorem \ref{Main} (in fact, Conjecture \ref{nDMain} in dimensions $n \leq 4$) follows.

\begin{proof}[{\bf Proof of Theorem \ref{Main}}]
Let $u$ be the Legendre transform of the solution $u^*$ obtained in Proposition \ref{GlobalObstacleProblem}. When $n \leq 4$, the function $u$ satisfies the desired conditions by Lemma \ref{LowDRegProp} and Proposition \ref{RegProp}.
\end{proof}

\begin{rem} 
The simplest nontrivial instance of Theorem \ref{Main} is the case that $\Omega$ is the segment connecting $\pm e_n$ (see Figure \ref{Fig3}).
\end{rem}

\begin{figure}
 \centering
    \includegraphics[scale=0.28, trim={0 55mm 0 40mm}]{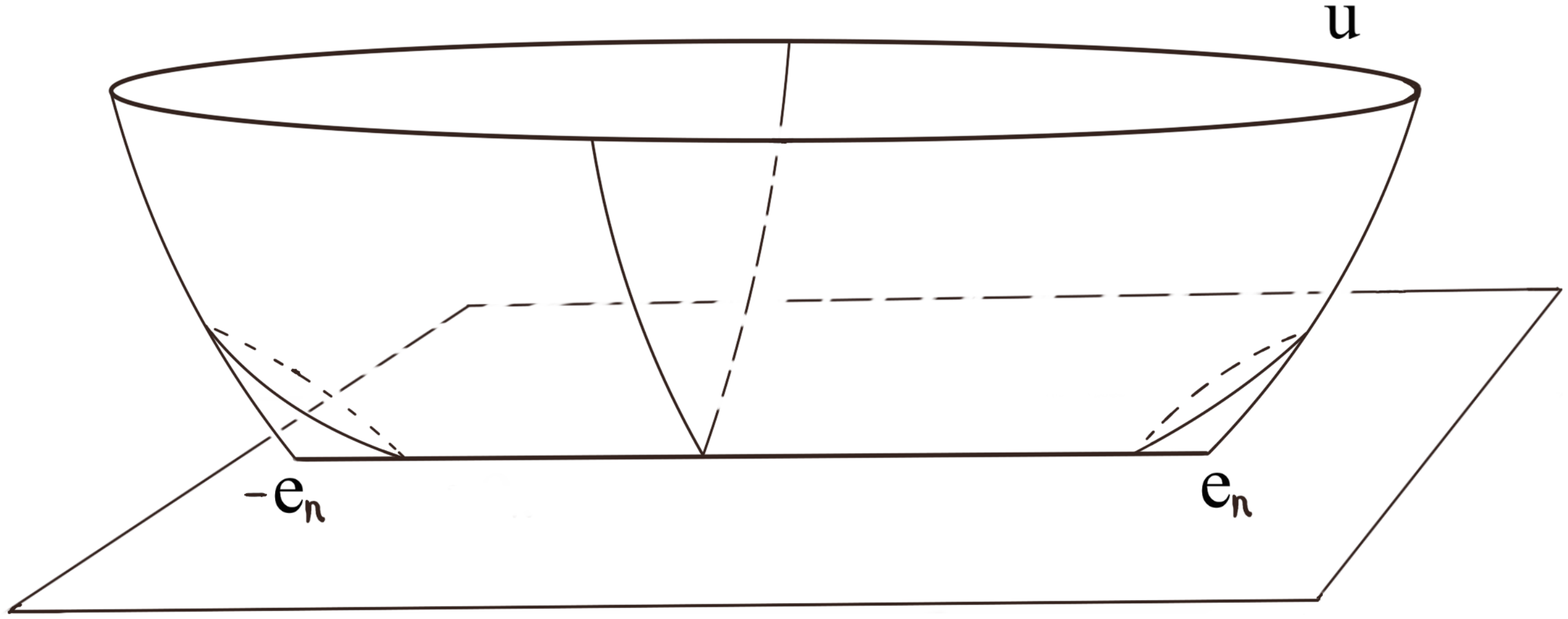}
 \caption{}
\label{Fig3}
\end{figure}

\begin{rem}
The key point when $n \leq 4$ is that $\partial u^*(p)$ is either a point or a line segment for any $p \in \mathbb{R}^n$ by the propagation result Corollary \ref{PropagationCor}. In contrast, if $n \geq 5$, then $\partial u^*(p)$ can be two-dimensional, so when one considers points e.g. in $\partial K \cap \Sigma_{n-1}$, then after normalizing as in the proof of Lemma \ref{LowDRegProp} so that $u^*$ is tangent from above to $(a x_n)_+$ at the origin, the propagation result doesn't prevent $\partial u^*(0)$ from having points off of the $x_n$-axis.
\end{rem}

\begin{rem}\label{5DCase}
A model case for Conjecture \ref{nDMain} in $\mathbb{R}^5$ would be to take coordinates $(z,\,y)$ with $z \in \mathbb{R}^3$ and $y \in \mathbb{R}^2$, and let $\Omega$ be a regular triangle in the $y$-plane centered at the origin. Then it is not hard to show that the function $u$ constructed as above satisfies that $u \geq 0$, and that $u$ is singular on $\{u = 0\} = \Omega$. However, it is not obvious to us whether $u$
is smooth away from $\Omega$. It remains a possibility, for example, that $u$ is affine when restricted to other triangles in the $y$-plane that share an edge with $\Omega$.
\end{rem}

\begin{rem}\label{Hessian}
By Theorem $1.5$ from \cite{JX}, $\Delta u$ is bounded by inverse distance from the singular set $\Gamma_1$ in our examples from Theorem \ref{Main}. In this remark we briefly discuss some model behaviors for the Monge-Amp\`{e}re metric $D^2u$ near $\Gamma_1$. 

First we discuss the edges. Since $\partial u$ maps each point in $S_1$ to a convex set of dimension $n-1$, the function $u$ has a Lipschitz singularity on each edge. In particular, $\Delta u$ grows exactly like inverse distance from $S_1$. A cylindrically symmetric model for such a singularity is the Pogorelov-type example
$$E(x) = \rho + \rho^{n/2}f(x_n),$$
where $x = (x',\,x_n),\, \rho = |x'|,$ and $f > 0$ is smooth, even and uniformly convex. (This example has Monge-Amp\`{e}re measure bounded between positive constants near the origin). Near the origin the metric $D^2E$ has the behavior
$$D^2E \sim \rho^{\frac{n}{2}-2}\, \nabla \rho \otimes \nabla \rho + \rho^{\frac{n}{2}}\, e_n \otimes e_n + \rho^{-1}\,(I - \nabla \rho \otimes \nabla \rho - e_n \otimes e_n).$$

We now discuss the vertices, where again $u$ has Lipschitz singularities. A model for the behavior of $u$ near an isolated vertex (the case $\Omega = \{0\}$) is the radial function
$$V(x) = r + r^{n+1},$$
where $r = |x|$. For $r$ small the metric $D^2V$ has the behavior
$$D^2V \sim r^{n-1}\, \nabla r \otimes \nabla r + r^{-1}(I - \nabla r \otimes \nabla r).$$
If the vertex is not isolated then $u$ is affine along each edge in $\Gamma_1$ that meets the vertex, and we expect that the behavior of the metric $D^2u$ transitions from the model $D^2V$ to the model $D^2E$ as one moves towards an edge.
\end{rem}

\section{Y-Shaped Singularities}\label{YSing}
The approach to constructing singular Monge-Amp\`{e}re metrics by solving an obstacle problem is flexible, and by changing the obstacle one can produce
examples similar to the ones from Section \ref{LowD} that are singular on a variety of graphs (not just the edges of convex
polytopes). For example, if we instead take $P^*$ to be the maximum of finitely many affine functions, then the approach produces examples that are affine when restricted to the line segments that connect pairs of points which are gradients of $P^*$ in open regions of $\mathbb{R}^n$ that share an $n-1$-dimensional face. 
To produce examples with singularities that form a Y-shape, we need to refine our choice of obstacle. In this section we indicate how to construct such examples, again in dimensions $n = 3$ and $n = 4$. More precisely, we show:

\begin{thm}\label{YShaped}
Let $\Gamma$ be a finite union of line segments in $\mathbb{R}^n$ that share a common vertex and point in distinct directions from this vertex. Assume further that $n = 3$ or $n = 4$. Then there exists a convex function $u: \mathbb{R}^n \rightarrow \mathbb{R}$ such that $u \in C^{\infty}(\mathbb{R}^n \backslash \Gamma)$, $u$ is affine when restricted to any of the segments in $\Gamma$, and
$$\det D^2u = 1 + \sum_{q \in \Gamma_0} a_q\delta_q,$$
where $\Gamma_0$ is the set of endpoints of the segments in $\Gamma$ and $a_q > 0$.
\end{thm}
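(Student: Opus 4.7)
The plan is to follow the Legendre-transform strategy of Theorem \ref{Main}, with $u$ obtained as the Legendre transform of a global obstacle-problem solution $u^*$, but with a more refined choice of obstacle $P^*$ tailored to the Y-shape. After a translation I may assume the common vertex of $\Gamma$ is at the origin, so that $\Gamma = \bigcup_{i=1}^N [0, q_i]$ with the $q_i$ distinct nonzero vectors. My target is for the contact set $K = \{u^* = P^*\}$ to decompose into $N + 1$ affine pieces of gradients $0, q_1, \ldots, q_N$, with the central piece adjacent to each outer piece along an $(n-1)$-dimensional face (this produces the arms $[0, q_i]$ in $\partial u^*(K)$), while outer pieces are pairwise non-adjacent at codimension one (this prevents spurious segments $[q_i, q_j]$ from arising).

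I would start from the piecewise-linear obstacle $P_0^*(y) := \max(L_0, L_1, \ldots, L_N)$, where $L_0 \equiv 0$ and $L_i(y) = q_i \cdot y + c_i$ with constants tuned so that $V_0 := \{L_0 \geq L_i\}$ is a central bounded cell and each $V_i := \{L_i \geq L_j \text{ for all } j\}$ is a pocket in the direction $q_i$. As noted in the opening of this section, $P_0^*$ alone would produce singular segments in $u$ between every pair $(q_i, q_j)$ of gradient vectors whose naive cells share an $(n-1)$-face. To eliminate the unwanted pairs (those with $i, j \geq 1$), I enumerate the unwanted adjacency regions $U_1, \ldots, U_M$ (bounded open neighborhoods of the $(n-1)$-dim faces $V_i \cap V_j$) and, for each $U_m$, construct a local barrier $\phi_m$: a convex function on $\mathbb{R}^n$ equal to $P_0^*$ outside $U_m$ and equal inside to a Monge--Amp\`ere Dirichlet solution of $\det D^2 \phi_m = \Lambda > 1$ with boundary data $P_0^*|_{\partial U_m}$, so that $\phi_m > P_0^*$ strictly inside $U_m$. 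I then set
\[
P^*(y) := \max\bigl(P_0^*(y),\, \phi_1(y),\, \ldots,\, \phi_M(y)\bigr),
\]
which is convex and agrees with $P_0^*$ away from the $U_m$'s, with $\det D^2 P^* > 1$ inside each $U_m$; the latter forces any obstacle-problem solution to satisfy $u^* > P^*$ strictly in each $U_m$.

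With $P^*$ in hand I would apply the construction of Proposition \ref{GlobalObstacleProblem} to obtain a global solution $u^*$ sandwiched between $W_n + \epsilon$ and $W_n - 1$. Since $P^*$ remains linear at infinity, the sandwich forces $K$ to be compact, and the barriers $w_{n,k}$ of Subsection \ref{Barriers} guarantee that $K$ has nonempty interior in each $V_i$; by the $\det D^2 P^* > 1$ property, $K$ avoids the $U_m$'s entirely. Then $\partial u^* = \partial P^*$ on $K$ holds in dimensions $n \leq 4$ by the argument of Lemma \ref{LowDRegProp}: at points of $K \cap (V_0 \cap V_i)$, Corollary \ref{PropagationCor} forces $\partial u^*(p)$ to be at most one-dimensional and aligned with the line through $0$ and $q_i$, which combined with $[0, q_i] \subset \partial u^*(p)$ gives equality; at interior points of $V_i$, Proposition \ref{ExtremalC1} applies as in the polytope case.

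Finally, setting $u := (u^*)^*$, the argument of Proposition \ref{RegProp} shows that $u \in C^\infty(\mathbb{R}^n \setminus \Gamma)$, that $u$ is affine on each arm of $\Gamma$ (since $\partial u^*$ sweeps the arm linearly as one crosses the associated $V_0 \cap V_i$ face), and that $\det D^2 u = 1 + \sum_{q \in \Gamma_0} a_q \delta_q$ with $a_q > 0$ the Lebesgue measure of the affine piece $V_i$ of $u^*$ dual to $q$. The main obstacle is the construction of the local barriers $\phi_m$: ensuring that each is globally convex, that the Dirichlet solution inside $U_m$ matches $P_0^*|_{\partial U_m}$ in a way that preserves convexity across $\partial U_m$, and that the $U_m$'s can be chosen disjoint (so that $P^*$ is unambiguously $\phi_m$ on each $U_m$). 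This is a finite but delicate convex-geometric construction whose details depend on the arm arrangement of $\Gamma$; the scaling freedom in the $L_i$'s and in the sizes of the $U_m$'s provides the flexibility needed to carry it out.
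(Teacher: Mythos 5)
Your high-level strategy matches the paper's: solve a global obstacle problem with obstacle $P^* = \max\{0, L_1, \ldots, L_M\}$, use the barriers $w_{n,k}$ and the propagation result to understand the contact set $K$ and $\partial u^*$ on $K$, and take the Legendre transform. You also correctly identify the central difficulty: the naive obstacle would produce spurious singular segments $[q_i, q_j]$ wherever the cells $V_i$ and $V_j$ ($i, j \geq 1$) share a face. The difference is in how you eliminate those faces, and your mechanism does not work.

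You propose to raise the obstacle over neighborhoods $U_m$ of the unwanted faces by taking $P^* = \max(P_0^*, \phi_1, \ldots, \phi_M)$, with each $\phi_m$ equal to $P_0^*$ outside $U_m$, strictly larger inside, and satisfying $\det D^2 \phi_m = \Lambda > 1$ inside $U_m$. Such a $\phi_m$ cannot exist. If $P^*$ is convex, $P^* \geq P_0^*$, and $P^* = P_0^*$ on $\mathbb{R}^n \setminus U_m$, then for any $p \in \partial P^*(x_0)$ with $x_0 \in U_m$, sliding the supporting plane down until it touches $P_0^*$ shows $p \in \partial P_0^*(\overline{U_m})$. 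Since $\overline{U_m}$ meets only the cells $V_i$, $V_j$ and the face between them, $\partial P_0^*(\overline{U_m}) = [q_i, q_j]$, a segment of $n$-dimensional measure zero. Hence $M P^*(U_m) = |\partial P^*(U_m)| = 0$, which contradicts $\det D^2 P^* \geq \Lambda\,\chi_{U_m}$. Equivalently, the total Monge--Amp\`ere mass available to any such modification is bounded by that of $P_0^*$ on $\overline{U_m}$, which vanishes; in one dimension one can see directly that the glued function fails to be convex at $\partial U_m$ once $\Lambda > 1$. There is no way to produce an obstacle that dominates a piecewise-linear function, agrees with it outside a compact set, and carries absolutely continuous Monge--Amp\`ere mass in between.

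The paper sidesteps this by never modifying the piecewise-linear obstacle. Instead, the affine pieces $L_i$ are chosen so that the arm regions $\overline{B_1} \cap \{L_i \geq 0\}$ are pairwise disjoint (each one a small cap near $\partial B_1$), which removes the unwanted faces \emph{ab initio} from the region that matters. The obstacle \emph{and} boundary data for the truncated problem are taken to be $\varphi = \max\{W_n - \epsilon, 0, L_1, \ldots, L_M\}$, and the role of the radial piece $W_n - \epsilon$ is to confine the contact set: a strong maximum principle argument shows $K \subset \{\varphi > W_n - \epsilon\}$, which lands $K$ precisely in the region where $\varphi = \max\{0, L_1, \ldots, L_M\}$ and the arm geometry is under control. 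The effect you were trying to achieve with the bumps $\phi_m$ — slightly lowering the boundary data so the solution detaches before it can stick on the unwanted faces — is exactly the content of the remark following the paper's proof. Once $K$ is shown to decompose into a central cell adjacent to $M$ pairwise-disjoint external cells, your Steps 3 and 4 (propagation to get $\partial u^* = \partial P^*$ on $K$ when $n \leq 4$, then the Legendre transform argument) go through as in Lemma \ref{LowDRegProp} and Proposition \ref{RegProp} and match the paper.
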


\noindent In particular, when $\Gamma$ consists of three segments with a common endpoint we have a Y-shaped singular set. The singular set $\Gamma$ is not in general
contained in a level set of $u$ (unlike the examples in the previous section), but it is possible to make this happen when $\Gamma$ has certain symmetries (see Remark \ref{SingLevelSet}). Since the proof of Theorem \ref{YShaped} is similar to that of Theorem \ref{Main} we just sketch the main steps.

\begin{proof}[{\bf Proof of Theorem \ref{YShaped}:}]
After a translation we may assume that the common vertex is the origin. By taking quadratic rescalings, we see it suffices to prove Theorem \ref{YShaped} with singular set $\delta \Gamma$, for some $\delta > 0$ small to be chosen.

\vspace{1mm}

{\bf Step 1: Obstacle Problem.} The first step is to solve a global obstacle problem. In this step the dimension $n$ is arbitrary. For some $r_0 \in (0,\,1/4)$ depending on the directions of the segments in $\Gamma$, we can find a collection $\{L_i\}_{i = 1}^M$ of affine functions such that $\nabla L_i$ are the nonzero endpoints of the segments in $\delta\Gamma$, and the sets $\overline{B_1} \cap \{L_i \geq 0\}$ are congruent, pairwise disjoint, and have exterior tangent ball $B_{1-r_0}$ at points $p_i$. Take $\epsilon > 0$ small so that $\{W_n \leq \epsilon\} \cap \{L_i \geq 0\}$ are also pairwise disjoint, then take $\delta$ small so that $\{L_i \geq (W_n - \epsilon)_+\}$ are pairwise disjoint and
$$\max_{i \leq M}\, L_i < W_n + \tilde{\epsilon} \text{ on } \mathbb{R}^n,$$
for some $\tilde{\epsilon} > 0$ to be chosen later. Finally, let
$$\varphi(x) := \max\{W_n-\epsilon,\,0,\,L_1,\,...,\,L_M\}.$$
We observe that the sets $\{\varphi = L_i\} = \{L_i \geq (W_n-\epsilon)_+\}$ are pairwise disjoint.

For $R >> 1$, let $u^*_R$ be the solution to the obstacle problem in $B_R$ with boundary data and obstacle equal to $\varphi$, and measure $dx$. Since $W_n + \tilde{\epsilon}$ is a supersolution to the equation that lies above the obstacle, we have that
$$\varphi \leq u^*_R \leq W_n + \tilde{\epsilon} \text{ in } B_R.$$
Up to taking a subsequence, the functions $u^*_R$ thus converge locally uniformly as $R \rightarrow \infty$ to a function $u^*$ on $\mathbb{R}^n$ that satisfies
$$0 \leq \varphi \leq u^* \leq W_n + \tilde{\epsilon} \text{ on } \mathbb{R}^n, \quad \det D^2u^* \leq 1, \quad \det D^2u^* = 1 \text{ in } \{u^* > \varphi\}.$$

\vspace{1mm}

{\bf Step 2: The Contact Set.} The second step is to study the geometry of
$$K := \{u^* = \varphi\},$$
and to show that to show that $\partial \varphi(K) = \delta \Gamma$. In this step we assume that $n \geq 3$.
We claim that $K$ has nonempty interior in each of the (pairwise disjoint) $n-1$-dimensional balls $B_1 \cap \{L_i = 0\}$, and that $K \subset \{\varphi > W_n - \epsilon\}$.
To prove the first claim we use that $0 \leq u^* \leq \tilde{\epsilon}$ in $B_1$. Provided $\tilde{\epsilon}$ is sufficiently small depending on $r_0$, we can use the barrier $w_{n,\,1}$ in the same way as in the proof of Proposition \ref{GlobalObstacleProblem} to show that $K$ contains a neighborhood of each point $p_i$.
The second claim follows from the strong maximum principle. If $u^* = \varphi$ at a point
in $\{\varphi = W_n - \epsilon\}$, then $u^* + \epsilon$ touches $W_n$ from above at some point in the open set $\{W_n > \epsilon/2\}$. However, $u^* + \epsilon > W_n$
in a neighborhood of $\{W_n = \epsilon/2\}$. Since $W_n$ smoothly solves $\det D^2W_n = 1$
in $\{W_n > \epsilon/2\}$, this contradicts the strong maximum principle.

The set $K$ is thus the union of $M+1$ compact convex sets, one ``central" set (contained in $\{\varphi = 0\}$) and $M$ ``external" sets (each contained in one of $\{\varphi = L_i\}_{i = 1}^M$), such that the central set meets each external set along an $n-1$-dimensional face and the external sets are pairwise disjoint (see Figure \ref{Fig4} for the case $M = 3$). Furthermore, since $K \subset \{\varphi > W_n-\epsilon\}$, we see that $u^*$ is in fact a global solution to the obstacle problem with obstacle 
$$P^* := \max\{0,\, L_1,\, ... ,\,L_M\},$$
and that
$$\partial \varphi(K) = \partial P^*(K) = \delta\Gamma.$$

\vspace{1mm}

{\bf Step 3: Subgradients on the Contact Set.} The last step is to show when $n = 3$ or $4$ that the Legendre transform of $u^*$ satisfies the conditions of Theorem \ref{YShaped}. It suffices to show that $\partial u^* = \partial P^*$ on $K$, by essentially the same argument as in the proof of Proposition \ref{RegProp}. (The only difference in this case is that the Legendre transform $u$ is affine, rather than zero, on each segment in $\delta\Gamma$.) Showing that $\partial u^* = \partial P^*$ on $K$ when $n = 3$ or $4$ is the same as in the proof of Lemma \ref{LowDRegProp}, and this concludes the proof.
\end{proof}

\begin{figure}
 \centering
    \includegraphics[scale=0.28, trim={0 25mm 0 10mm}]{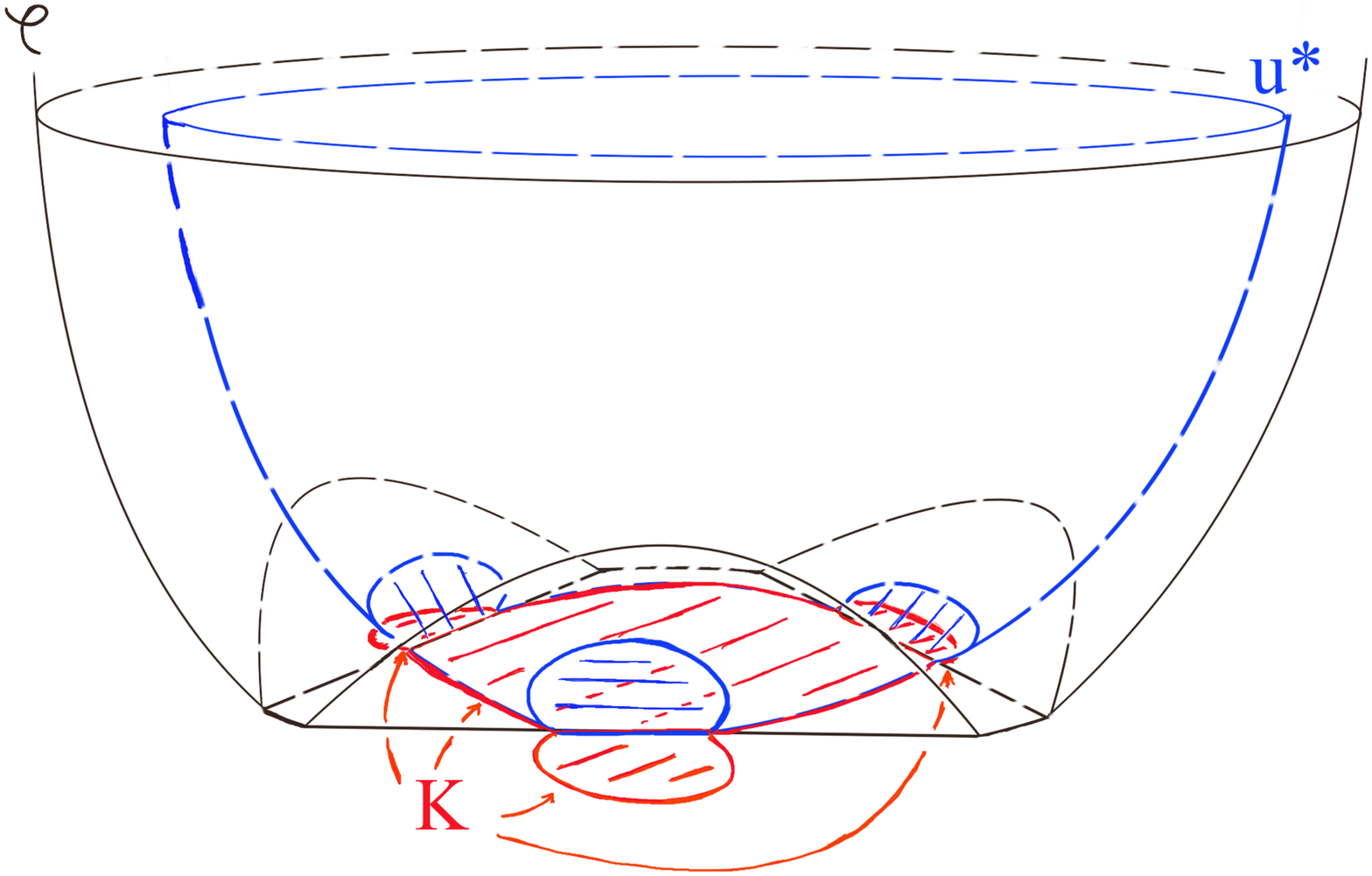}
 \caption{}
\label{Fig4}
\end{figure}

\begin{rem}
The idea is to slowly lower the boundary data for the obstacle problem with obstacle $P^*$. At first the solution will stick to the obstacle on only one region where $P^*$ is affine. Eventually, it will stick on all such regions and the $n-1$-dimensional faces that join them. The game is to stop somewhere in between.
\end{rem}

\begin{rem}\label{SingLevelSet}
It is possible to construct instances of Theorem \ref{YShaped} where $\Gamma$ lies in a set where $u$ is linear, for example when $n = 3$ and $\Gamma$ consists of segments of unit length that start at the origin and end on the vertices of a regular polygon that does not contain the origin.
\end{rem}



\end{document}